\title{A lower bound for the number of Egyptian fractions}
\keywords{Egyptian fractions}
\subjclass[2010]{Primary: 11D68, Secondary: 11B99}
\author{Sandro Bettin}
\address{Dipartimento di Matematica, Universit\`{a} di Genova, Via Dodecaneso 35, 16146 Genova, Italy}
\email{sandro.bettin@unige.it}
\author{Lo\"{\i}c Greni\'{e}}
\address{Dipartimento di Ingegneria Gestionale, dell'Informazione e della Produzione, Universit\`{a} degli
Studi di Bergamo, viale Marconi 5, 24044 Dalmine, Italy}
\email{loic.grenie@gmail.com}
\author{Giuseppe Molteni}
\address{Dipartimento di Matematica, Universit\`{a} di Milano, Via Saldini 50, 20133 Milano, Italy}
\email{giuseppe.molteni1@unimi.it}
\author{Carlo Sanna}
\address{Dipartimento di Scienze Matematiche, Politecnico di Torino, Corso Duca degli Abruzzi 24, 10129 Torino, Italy}
\email{carlo.sanna@polito.it}
\pgfplotsset{compat=1.18}
\newtheorem*{theorem*}{Theorem}
\newtheorem{theorem}{Theorem}
\newtheorem{lemma}{Lemma}
\theoremstyle{remark}
\newcommand{\Z}{\mathbb{Z}}
\newcommand{\R}{\mathbb{R}}
\newcommand{\EE}{{E}}
\newcommand{\EF}{{E}}
\newcommand{\UU}{\mathcal{U}}
\newcommand{\af}{\mathfrak{a}}
\newcommand{\eps}{\varepsilon}
\newcommand{\dd}{\mathrm{d}}
\newcommand{\card}[1]{{|{#1}|}}
\DeclareMathOperator{\lcm}{lcm}
\DeclareMathOperator{\Li}{Li}
\newcolumntype{L}{>{$}l<{$}}
\newcolumntype{R}{>{$}r<{$}}
\newcolumntype{C}{>{$}c<{$}}
\newenvironment{List}{\begin{list}{$\bullet$}{
\setlength{\labelwidth}{.3cm}
\setlength{\leftmargin}{.5cm}
}
}{\end{list}}
\begin{document}

\begin{abstract}
An \emph{Egyptian fraction} is a sum of the form $1/n_1 + \cdots + 1/n_r$ where $n_1, \dots, n_k$
are distinct positive integers.
We prove explicit lower bounds for the cardinality of the set $\EE_N$ of rational numbers that can be
represented by Egyptian fractions with denominators not exceeding $N$.
More precisely, we show that for every integer $k \geq 4$ such that $\ln_k N \geq 3/2$ it holds
\[
\frac{\ln(\card{\EE_N})}{\ln 2}
\geq \Big(2 - \frac{3}{\ln_k N}\Big)\frac{N}{\ln N}\prod_{j=3}^{k} \ln_j N ,
\]
where $\ln_k$ denotes the $k$-th iterate of the natural logarithm. This improves on a previous result
of Bleicher and Erd\H{o}s who established a similar bound but under the more stringent condition
$\ln_k N\geq k$ and with a leading constant of $1$.\\
Furthermore, we provide some methods to compute the exact values of $|\EE_N|$ for large positive
integers $N$, and we give a table of $|\EE_N|$ for $N$ up to $154$.
\end{abstract}

\maketitle

\section{Introduction}

An \emph{Egyptian fraction} is a sum of the form $1/n_1 + \cdots + 1/n_r$ where $n_1, \dots, n_k$
are pairwise distinct positive integers.
It is well known that every positive rational number can be written as an Egyptian fraction.
In general, this representation is not unique.
Several authors studied the properties of Egyptian fractions.
For example, Yokota~\cite{Yokota,Yokota2} (see also~\cite{TenenbaumYokota}) proved that, for all positive
integers $a < b$, the rational number $a/b$ is represented by an Egyptian fraction with denominators of
size $O\big(b (\ln b)^{1+\eps}\big)$; while Vose~\cite{Vose} showed that $a/b$ can be represented using
at most $O\big(\!\sqrt{\ln b}\big)$ denominators.
In the opposite direction of these ``efficient'' representations, Martin~\cite{Martin,Martin2} showed
that every positive rational number can be represented using a very ``dense'' set of denominators.
Furthermore, Croot~\cite{Croot} proved that all positive integers less than $\big[\sum_{n\leq N}\frac1n\big]$
can be represented using denominators up to $N$. We refer to the paper by Bloom and
Elsholtz~\cite{BloomElsholtz} for a recent survey on the topic of Egyptian fractions.

In this paper we are interested in the cardinality of the set
\begin{equation}\label{equ:definition-of-E_N}
    \EE_N := \bigg\{\sum_{n = 1}^N \frac{t_n}{n}\colon t_1, \dots, t_N \in \{0,1\} \bigg\},\qquad N\in\Z_{\geq 1},
\end{equation}
of Egyptian fractions that employ denominators up to $N$. It is clear from the definition that $|\EE_N|$
is increasing, and in fact it is strictly increasing, since $\EE_N\setminus \EE_{N-1}$ contains at least
the $N$-th harmonic number.

Bleicher and Erd\H{o}s~\cite[Th.~2 and~3]{BleicherErdos2} proved that
\begin{equation}\label{eq:1C}
\alpha\frac{N}{\ln N}\prod_{j=3}^k\ln_{j} N \leq \ln(\card{\EE_N}) \leq \frac{N\ln_{k}N }{\ln N}\prod_{j=3}^k\ln_{j} N
\end{equation}
where $\alpha=e^{-1}$, $k$ is any positive integer such that $\ln_{2k} N\geq 1$, where $\ln_k$ denotes
the $k$-th iteration of the natural logarithm. In~\cite[Cor.~1, 2, and 3]{BleicherErdos3} they improved
the lower bound, increasing the value of the admissible $\alpha$ to $\alpha=\ln 2$ and relaxing the
condition on $k$ to $\ln_{k} N\geq k$. The problem of determining the size of $\card{\EE_N}$ is then also
mentioned in a problem list by Erd\H{o}s and Graham~\cite[p.~43]{ErdosGraham}.

In this paper, we give a new improvement to the lower bound in~\eqref{eq:1C} showing that the condition
on $k$ can be further relaxed to $\ln_{k} N\geq 3/2$, essentially the weakest possible for a bound of
this shape, while $\alpha$ can be taken to be $2\ln 2-\eps$ for any fixed $\eps>0$ provided that $\ln_k N$
is large enough.  More specifically, we prove the following result.
\begin{theorem}\label{thm:lower-bound}
For all positive integers $k$ and $N$, it holds
\[
\ln(\card{\EE_N})
\geq
2\ln 2\,\frac{N}{\ln N}
\times
\begin{cases}
\displaystyle
1        & \text{if } \ln_2 N \geq 1,\\[.3cm]
\displaystyle
\ln_3 N & \text{if } \ln_3 N \geq 1,\\
\displaystyle
\Big(1 - \frac{3/2}{\ln_k N}\Big)\prod_{j=3}^{k} \ln_j N & \text{if $k\geq 4$ and } \ln_k N \geq 3/2.
\end{cases}
\]
\end{theorem}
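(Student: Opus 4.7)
My strategy is to construct, for each admissible pair $(k,N)$, a subset $\UU\subseteq\{1,\dots,N\}$ whose reciprocals have pairwise distinct subset sums; then $\card{\EE_N}\geq 2^{\card\UU}$ reduces the theorem to a lower bound on $\card\UU$. I build $\UU$ by a \emph{level decomposition by largest prime factor}: for each prime $p$ in a suitable range, I include the multiples $mp\leq N$ with $m$ from some set $\mathcal{M}_p\subseteq\{1,\dots,\lfloor N/p\rfloor\}$. Independence across distinct primes $p,p'>\sqrt N$ is controlled by $p$-adic valuations (writing $c_p=\sum_{m\in\mathcal{M}_p}t_{mp}/m$, one has $v_p((1/p)c_p)=-1+v_p(c_p)$ while $v_p((1/p')c_{p'})\geq 0$ for $p'\neq p$), whereas independence within a single prime reduces to the sum-distinctness of $\{1/m:m\in\mathcal{M}_p\}$, i.e.\ $\sum_i\epsilon_i/m_i=0$ with $\epsilon_i\in\{-1,0,1\}$ forces $\epsilon=0$.

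\paragraph{The three cases.}
For the base case $\ln_2 N\geq 1$ I take $\mathcal{M}_p=\{1,2,3,4,5\}\cap\{1,\dots,\lfloor N/p\rfloor\}$: sum-distinctness is a direct check on the weights $60,30,20,15,12$ (no non-trivial $\pm 1$-combination of them vanishes), and Abel summation yields
\[
\card\UU=\sum_{j=1}^{5}\pi(N/j)-5\pi(\sqrt N)+O(1)\sim H_5\,\frac{N}{\ln N},
\]
with $H_5=137/60>2$, giving $\ln\card{\EE_N}\geq (2\ln 2)\,N/\ln N$ via effective forms of PNT. For the cases $k\geq 3$ I expect to take $\mathcal{M}_p$ consisting of integers $2^{a}q$ with $q$ prime in a nested range and $0\leq a\leq a_{\max}$, restricting to $q\geq 11$ (or any explicit threshold) so as to avoid classical Egyptian-fraction obstructions such as $1/2=1/3+1/6$. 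Cross-prime independence then forces $p$ to be restricted so that $|Q_p\cdot c_p|<p$ (where $Q_p$ is the lcm of the denominators in $\mathcal{M}_p$), which corresponds roughly to $p>N/\ln N$; iterated applications of Mertens' sum $\sum_{q\leq x}1/q=\ln_2 x+M+o(1)$ over the nested prime-ranges at each successive level produce the factor $\prod_{j=3}^{k}\ln_j N$, while the effective error term at the innermost range yields the correction $1-3/(2\ln_k N)$.

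\paragraph{Main obstacle.}
The main technical difficulty is constructing large $\mathcal{M}_p$ with sum-distinct reciprocals: naive doubling constructions such as $\{q,2q:q\text{ prime}\}$ are ruined by Egyptian-fraction identities, and any explicit threshold $q\geq q_0$ chosen to avoid them must be balanced carefully against the target constant $2\ln 2$ (as opposed to the weaker $\ln 2$ of Bleicher--Erd\H{o}s). Equally important is tracking the effective error terms in PNT and Mertens' theorem uniformly through all $k-2$ iterated levels so that the inequality holds for every admissible $N$, not merely asymptotically; the precise form of the correction factor $1-3/(2\ln_k N)$ will then emerge from this bookkeeping at the boundary $\ln_k N=3/2$.
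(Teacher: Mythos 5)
Your framework differs from the paper's in one structural respect worth noting: you require the \emph{global} sum-distinctness of $\{1/n:n\in\UU\}$, whereas the paper's $\UU$ is defined by the sequential doubling condition ($N\in\UU$ iff $1/N\neq\sum_{n<N}w_n/n$ for all $w_n\in\{-1,0,1\}$), which gives $\card{\EE_N}\geq 2^{\card{\UU(N)}}$ directly without any global independence check, and which combines with the ``compatible prime'' step ($m\in\UU$, $p>g_m$ $\Rightarrow mp^k\in\UU$) into a clean recursion. Your base case ($\mathcal{M}_p=\{1,\dots,5\}$, $p>\sqrt N$) is sound and yields the $H_5=137/60$ density the paper also uses, and your $p$-adic cross-prime argument is essentially the compatibility argument of Lemma~\ref{lem:2C}. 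So the ingredients are the right ones.

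The genuine gap is that your proof of the cases $k\geq3$ — which is the entire point of the theorem — is a sketch, not an argument. You write that you ``expect to take'' $\mathcal{M}_p=\{2^aq\}$ with $q$ prime in ``nested ranges'' and that ``iterated applications of Mertens'' sum'' will produce $\prod_{j=3}^k\ln_j N$ while the effective error ``will emerge from bookkeeping''; but none of this is carried out, and it is precisely here that all the difficulty lies. The paper's key device (Lemma~\ref{lem:6C}) is the self-improving inequality $\card{\UU(x)}\geq\frac{x}{\ln x}\int_1^y\frac{\card{\UU(v)}}{v^2}\,\dd v$ valid for $x\geq 18\cdot 3^y$, obtained by summing over \emph{all} $m\in\UU(y)$ (not just products of primes in prescribed nested ranges) and doing one partial summation; this is then closed by the $T_k$/$a_k$ bookkeeping of Lemmas~\ref{lem:8C}--\ref{lem:9C}, where the constant $a_k\leq 1.4<3/2$ is what produces the precise shape $1-\frac{3/2}{\ln_k N}$ and the very weak condition $\ln_k N\geq 3/2$. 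Without an explicit analogue of that recursion and the quantitative control on the iterated error, your construction does not yield the stated bound: in particular, nothing in your plan shows that the $q_0$-thresholds at each nesting level do not accumulate into a loss of the leading constant, nor that $k$ can be taken as large as $\ln_k N\geq 3/2$ rather than, say, $\ln_k N\geq k$ as in Bleicher--Erd\H os. Your final remark — that the correction ``will emerge at the boundary $\ln_k N=3/2$'' — presupposes exactly what has to be proved.

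Minor point: the constant $2$ (hence $2\ln 2$) in the theorem is already supplied by the base-level estimate $\card{\UU(x)}\geq 2x/\ln x$ for $x\geq 13$ and is \emph{preserved} through the iteration; it is not something that degrades when you tighten the thresholds $q\geq q_0$ at the inner levels, which only affect the higher-order $\ln_j$ factors. Your stated worry that the threshold choice ``must be balanced against $2\ln 2$'' suggests a misreading of where that constant comes from.
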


We note that the improvement over Bleicher and Erd\H{o}s due to the weaker condition in $k$ translates to
an improvement in the order of growth of the lower bound. In fact, picking the largest admissible value of
$k$ in both bounds, Theorem~\ref{thm:lower-bound} yields a bound that goes to infinity faster than the
bound proven in~\cite[Cor.~3]{BleicherErdos3} since for extremely large values for $N$, larger values of
$k$ are allowed.
Further refinements to the constants in Theorem~\ref{thm:lower-bound} are certainly possible, as our main
goal was to establish the qualitative improvement resulting from relaxing the condition on $k$. For
example, the leading factor $2$ could be slightly improved by taking full advantage of the constant $2.2$
appearing in~\eqref{eq:6C}.

As in Bleicher and Erd\H{o}s' paper, the proof of the theorem proceeds by giving a lower bound on the
number of $N$ such that $|E_N|=2|E_{N-1}|$ using a recursive approach. In particular, letting $\UU(x)$ be
the number of such occurrences with $N\leq x$, in Lemma~\ref{lem:6C} we prove $\card{\UU(x)} \geq
\frac{x}{\ln x}  \int_{1}^{y} \frac{\card{\UU(v)}}{v^2} \dd v$ if $x$ is large enough. Together with a
bound for $\UU(x)$ for small $x$, the recursive use of the afore-mentioned inequality leads to the
claimed bound.
\medskip

For large positive integers $N$, computing $|\EE_N|$ directly from its definition~\eqref{equ:definition-of-E_N}
is infeasible.
In fact, this requires the computation of $2^N$ sums of rational numbers with denominators of size up
to $\lcm\{1,2,\dots,N\}$.
In sequence A072207 of OEIS~\cite{OEIS} it is possible to find the values of $|E_N|$ for $N=1,\dots,83$.
In Section~\ref{sec:computation} we explain the methods that we used to compute $|E_N|$ up to $N=154$.
These values are provided in Table~\ref{tab:2} at the end of the paper.
The main difficulty to extend the computation to $N > 154$ lies in the large quantity of memory that is
necessary to store the intermediate results, not in execution time.

Analysing the values of the quotient $|\EE_N|/|\EE_{N{-}1}|$ for the numbers that we have at our
disposal, we see that most of the times $|\EE_N|/|\EE_{N{-}1}|$ is either equal to $2$ or very close to
$1$, as expected since the  upper-bound in~\eqref{eq:1C} implies that $|\EE_N|/|\EE_{N{-}1}|\leq1+\eps$
for almost all $N$ for any fixed $\eps>0$.
More precisely, let
\[
D(n)
:= \Big|\Big\{
        N\leq 154\colon \frac{1}{\ln 2}\ln\Big(\frac{|\EE_N|}{|\EE_{N{-}1}|}\Big) \in \Big(\frac{n{-1}}{10},\frac{n}{10}\Big]
        \Big\}
   \Big|,
\qquad n=1,\ldots,10.
\]
Table~\ref{tab:1} shows that $D(n)$ is concentrated around $1$ and $10$, that is, $|\EE_N|/|\EE_{N{-}1}|$
is concentrated around $1$ and $2$.
Furthermore, a direct check reveals that all but one of the cases counted in the last column of
Table~\ref{tab:1} satisfy $|\EE_N|/|\EE_{N{-}1}| = 2$.
\begin{table}[ht]
\centering
\begin{tabular}{R|RRRRRRRRRR}
  \toprule
   n    &  1 & 2 & 3 & 4 & 5 & 6 & 7 & 8 & 9 & 10 \\
  \midrule
   D(n) & 49 & 3 & 2 & 4 & 1 & 2 & 2 & 2 & 3 & 85 \\
  \bottomrule
\end{tabular}
\medskip
\caption{The values of $D(n)$.}
\label{tab:1}
\end{table}

As an additional point in favor of our approach, we show the graph of
$|\UU(N)|\ln 2/\ln|\EE_N|$, for $1\leq N\leq 154$. As one can see, the sequence
does not appear to tend to $0$, much more to something above $0.85$.
Therefore, the doublings seem to explain most of the increase of $|\EE_N|$.

\begin{center}
\begin{tikzpicture}
    \begin{axis}[
            xmin=-10,xmax=154.49,
            ymin=-0.09,ymax=1.19,
            width=15cm,
            height=6cm,
%           scale mode=scale uniformly,
            axis x line=middle,
            axis y line=middle,
            axis line style=->,
            xlabel={$x$},
            xlabel style={anchor=north east},
            ylabel={$y$},
            ylabel style={anchor=north east},
            ytick distance=0.2,
            ]
        \draw[blue,thick]
                (  1,1) -- (  2,1) -- (  3,1) -- (  4,1)
             -- (  5,1) -- (  6,0.877) -- (  7,0.895) -- (  8,0.909)
             -- (  9,0.919) -- ( 10,0.928) -- ( 11,0.935) -- ( 12,0.921)
             -- ( 13,0.928) -- ( 14,0.933) -- ( 15,0.906) -- ( 16,0.913)
             -- ( 17,0.919) -- ( 18,0.883) -- ( 19,0.890) -- ( 20,0.885)
             -- ( 21,0.844) -- ( 22,0.852) -- ( 23,0.860) -- ( 24,0.857)
             -- ( 25,0.864) -- ( 26,0.870) -- ( 27,0.876) -- ( 28,0.874)
             -- ( 29,0.879) -- ( 30,0.878) -- ( 31,0.883) -- ( 32,0.887)
             -- ( 33,0.860) -- ( 34,0.865) -- ( 35,0.864) -- ( 36,0.863)
             -- ( 37,0.867) -- ( 38,0.872) -- ( 39,0.876) -- ( 40,0.875)
             -- ( 41,0.879) -- ( 42,0.879) -- ( 43,0.882) -- ( 44,0.865)
             -- ( 45,0.864) -- ( 46,0.868) -- ( 47,0.872) -- ( 48,0.871)
             -- ( 49,0.875) -- ( 50,0.878) -- ( 51,0.881) -- ( 52,0.871)
             -- ( 53,0.874) -- ( 54,0.861) -- ( 55,0.861) -- ( 56,0.860)
             -- ( 57,0.864) -- ( 58,0.867) -- ( 59,0.870) -- ( 60,0.870)
             -- ( 61,0.873) -- ( 62,0.875) -- ( 63,0.875) -- ( 64,0.878)
             -- ( 65,0.873) -- ( 66,0.873) -- ( 67,0.875) -- ( 68,0.859)
             -- ( 69,0.862) -- ( 70,0.862) -- ( 71,0.864) -- ( 72,0.864)
             -- ( 73,0.867) -- ( 74,0.869) -- ( 75,0.864) -- ( 76,0.853)
             -- ( 77,0.852) -- ( 78,0.852) -- ( 79,0.855) -- ( 80,0.855)
             -- ( 81,0.857) -- ( 82,0.860) -- ( 83,0.862) -- ( 84,0.862)
             -- ( 85,0.859) -- ( 86,0.862) -- ( 87,0.864) -- ( 88,0.864)
             -- ( 89,0.866) -- ( 90,0.866) -- ( 91,0.866) -- ( 92,0.868)
             -- ( 93,0.870) -- ( 94,0.872) -- ( 95,0.868) -- ( 96,0.868)
             -- ( 97,0.870) -- ( 98,0.872) -- ( 99,0.872) -- (100,0.872)
             -- (101,0.874) -- (102,0.873) -- (103,0.875) -- (104,0.875)
             -- (105,0.875) -- (106,0.877) -- (107,0.879) -- (108,0.879)
             -- (109,0.880) -- (110,0.880) -- (111,0.882) -- (112,0.882)
             -- (113,0.883) -- (114,0.880) -- (115,0.876) -- (116,0.877)
             -- (117,0.877) -- (118,0.879) -- (119,0.879) -- (120,0.879)
             -- (121,0.880) -- (122,0.882) -- (123,0.883) -- (124,0.885)
             -- (125,0.886) -- (126,0.886) -- (127,0.887) -- (128,0.889)
             -- (129,0.890) -- (130,0.890) -- (131,0.891) -- (132,0.891)
             -- (133,0.891) -- (134,0.892) -- (135,0.892) -- (136,0.892)
             -- (137,0.894) -- (138,0.892) -- (139,0.893) -- (140,0.893)
             -- (141,0.895) -- (142,0.896) -- (143,0.896) -- (144,0.896)
             -- (145,0.890) -- (146,0.891) -- (147,0.883) -- (148,0.885)
             -- (149,0.886) -- (150,0.886) -- (151,0.887) -- (152,0.887)
             -- (153,0.887) -- (154,0.887)
        ;
    \end{axis}
\end{tikzpicture}
\end{center}

\section*{Acknowledgements}
S.~Bettin is member of the INdAM group GNAMPA.
L.~Greni\'{e}, G.~Molteni and C.~Sanna are members of the INdAM group GNSAGA.
C.~Sanna is a member of CrypTO, the group of Cryptography and Number Theory of the Politecnico di Torino.
S.~Bettin is partially supported by PRIN 2022 “The arithmetic of motives and L-functions”, by the
Curiosity Driven grant “Value distribution of quantum modular forms” of the University of Genoa,
funded by the European Union--NextGenerationEU, and by the MIUR Excellence Department Project
awarded to Dipartimento di Matematica, Università di Genova, CUP D33C23001110001.

The authors thank C. Elsholtz for pointing them to the works of Bleicher and Erd\H{o}s mentioned in this
paper.
The computations of $\EE_N$ have been performed on the PlaFRIM platform in Bordeaux: the authors warmly
thank Karim~Belabas for his operative assistance.

\section*{Notation}
The letter $p$ is reserved for prime numbers.
For every finite set $S$, let $\card{S}$ be the cardinality of $S$.
For every real number $x$, let $\pi(x):=|\{p\leq x\}|$ be the prime-counting function.
For every integer $k \geq 0$, let $e_k$ be the tower of $k$ exponentials in $e$, that is $e_0:=1$ and
$e_{k}= e^{e_{k-1}}$ for $k\geq 1$.
For every integer $k \geq 0$, let $\ln_k$ the $k$-th iteration of the natural logarithm.
Note that $\ln_k e_k =1$ and $\ln_k e_{k-1}=0$ for every positive integer $k$.

\section{Proof of Theorem~\ref{thm:lower-bound}}\label{sec:1C}

Define the following set of positive integers
\[
\UU := \Big\{N \geq 1\colon \sum_{n=1}^{N-1} \frac{w_n}{n}
\neq \frac{1}{N} \text{ for all } w_1, \dots, w_{N-1} \in \{-1,0,+1\}\Big\} .
\]
Note that the set $\UU$ is stable by divisors, i.e., if $N\in\UU$ then every divisor of $N$ is in $\UU$
as well. For every real number $x$, let $\UU(x) := \UU \cap [1, x]$.
\begin{lemma}\label{lem:0C}
    Let $N$ be a positive integer.
    Then $N \in \UU$ if and only if $\card{\EE_N} = 2\card{\EE_{N - 1}}$.
\end{lemma}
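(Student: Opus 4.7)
The plan is to prove the equivalence by decomposing $\EE_N$ according to whether the coefficient $t_N$ is $0$ or $1$. Writing
\[
\EE_N = \EE_{N-1} \cup \bigl(\EE_{N-1} + \tfrac{1}{N}\bigr),
\]
(where $\EE_{N-1} + 1/N$ denotes the translated set), inclusion-exclusion immediately gives
\[
\card{\EE_N} = 2\card{\EE_{N-1}} - \card{\EE_{N-1}\cap(\EE_{N-1} + \tfrac{1}{N})}.
\]
Hence $\card{\EE_N} = 2\card{\EE_{N-1}}$ if and only if the intersection is empty.

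Next I would translate the non-emptiness of the intersection into the existence of a representation of $1/N$ with $\{-1,0,+1\}$-coefficients. Specifically, $\EE_{N-1}\cap(\EE_{N-1}+1/N)\neq\Emptyset$ means there exist $a,b\in \EE_{N-1}$ with $a-b = 1/N$. Writing $a = \sum_{n=1}^{N-1} s_n/n$ and $b = \sum_{n=1}^{N-1} t_n/n$ with $s_n, t_n \in \{0,1\}$, and setting $w_n := s_n - t_n \in \{-1,0,+1\}$, this is equivalent to the existence of $w_1,\dots,w_{N-1}\in\{-1,0,+1\}$ such that
\[
\sum_{n=1}^{N-1} \frac{w_n}{n} = \frac{1}{N}.
\]
Conversely, given such $w_n$'s, one sets $s_n := \max(w_n,0)$ and $t_n := \max(-w_n,0)$, both in $\{0,1\}$, which produces the desired pair $a, b \in \EE_{N-1}$. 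This shows that the intersection is empty precisely when no such representation exists, which by the definition of $\UU$ is the condition $N\in\UU$. Combining this with the previous step concludes the proof.

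There is no genuine obstacle here: the statement is essentially a rephrasing of the definition of $\UU$, and the only thing to be careful about is the bijection between differences of $\{0,1\}$-vectors and $\{-1,0,+1\}$-vectors, which is straightforward via $w_n = s_n - t_n$ and the inverse map above.
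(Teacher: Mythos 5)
Your proof is correct and follows essentially the same route as the paper: decompose $\EE_N = \EE_{N-1}\cup(\EE_{N-1}+1/N)$, observe that $\card{\EE_N}=2\card{\EE_{N-1}}$ iff this union is disjoint, and identify disjointness with the defining condition of $\UU$ via the correspondence $w_n=s_n-t_n$. The paper simply compresses the last step, stating that disjointness is equivalent to $N\in\UU$ ``by the definition of $\UU$,'' whereas you spell out the bijection between differences of $\{0,1\}$-vectors and $\{-1,0,+1\}$-vectors; the content is identical.
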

\begin{proof}
The definition of $\EE_N$ yields at once $\EE_N = \EE_{N - 1} \cup (\EE_{N - 1} + 1/N)$ (setting $\EE_0
:= \{0\}$). By the definition of $\UU$, the union is disjoint if and only if $N\in\UU$.
Since $\EE_{N - 1}$ and $\EE_{N - 1} + 1/N$ have the same cardinality, their union is disjoint if and
only if $\card{\EE_N} = 2\card{\EE_{N - 1}}$.

\end{proof}
\begin{lemma}\label{lem:1C}
    $\card{\EE_N} \geq 2^{\card{\UU(N)}}$ for every positive integer $N$.
\end{lemma}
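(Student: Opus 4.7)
The plan is a straightforward induction on $N$ using Lemma~\ref{lem:0C} to compare $\card{\EE_N}$ with $\card{\EE_{N-1}}$.

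For the base case I would take $N=0$, where by convention $\EE_0=\{0\}$ so that $\card{\EE_0}=1=2^0=2^{\card{\UU(0)}}$.

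For the inductive step, suppose the bound $\card{\EE_{N-1}}\geq 2^{\card{\UU(N-1)}}$ holds. There are two cases according to whether $N\in\UU$ or not. If $N\in\UU$, then Lemma~\ref{lem:0C} yields $\card{\EE_N}=2\card{\EE_{N-1}}$, while $\card{\UU(N)}=\card{\UU(N-1)}+1$; combining these gives $\card{\EE_N}\geq 2\cdot 2^{\card{\UU(N-1)}}=2^{\card{\UU(N)}}$. If instead $N\notin\UU$, then from $\EE_N=\EE_{N-1}\cup(\EE_{N-1}+1/N)\supseteq \EE_{N-1}$ we have $\card{\EE_N}\geq \card{\EE_{N-1}}$, while $\card{\UU(N)}=\card{\UU(N-1)}$, so again $\card{\EE_N}\geq 2^{\card{\UU(N)}}$.

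There is no real obstacle here: the lemma is essentially a bookkeeping consequence of Lemma~\ref{lem:0C}, observing that each element of $\UU(N)$ contributes a factor of $2$ to $\card{\EE_N}$ while the other steps are at worst non-decreasing. The only thing to be careful about is the convention $\EE_0:=\{0\}$ already set in the proof of Lemma~\ref{lem:0C}, which makes the base case trivial.
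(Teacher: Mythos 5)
Your proof is correct and is essentially the argument the paper has in mind; the paper simply says the claim ``follows immediately from Lemma~\ref{lem:0C},'' and your write-up just makes the implicit induction explicit (base case $\EE_0=\{0\}$, doubling when $N\in\UU$, monotonicity when $N\notin\UU$).
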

\begin{proof}
    The claim follows immediately from Lemma~\ref{lem:0C}.
\end{proof}

\noindent %
Thanks to Lemma~\ref{lem:1C}, to produce a lower bound for $\card{\EE_N}$, it suffices to give a lower bound
for $\card{\UU(N)}$.

For each positive integer $m$, let
\[
d_m := \lcm\{1, \dots, m\},\qquad g_m := d_m \sum_{j = 1}^m \frac 1  j.
\]
We will require the following bound for $\pi(g_m)$.
\begin{lemma}\label{lem:3C}
For any positive integer $m$, we have $\pi(g_m)\leq \frac{18\cdot 3^m}{m\ln(18\cdot 3^m)}$.
\end{lemma}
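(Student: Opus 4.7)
The plan is to combine an explicit Chebyshev-type bound for $\pi$ with an explicit upper bound on $g_m$, reducing the target inequality to an exponential-vs-polynomial comparison that is easy for large $m$ and verifiable by direct computation for small $m$.

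I would start from the decomposition $g_m = d_m H_m$ with $H_m = \sum_{j=1}^m 1/j \leq 1+\ln m$, and control $d_m = e^{\psi(m)}$ via explicit bounds on the Chebyshev $\psi$-function. Either Hanson's inequality $d_m \leq 3^m$ (valid for all $m$) or, more tightly, a Rosser--Schoenfeld bound $\psi(m) \leq 1.04\, m$, yields $g_m \leq 3^m(1+\ln m)$. Combining this with the explicit Chebyshev inequality $\pi(x) \leq 1.26\, x/\ln x$ and the trivial $\ln g_m \geq \ln d_m = \psi(m)$, one obtains
\[
\pi(g_m) \;\leq\; \frac{1.26\, e^{\psi(m)}(1+\ln m)}{\psi(m)}.
\]

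To finish, it suffices to check that this upper bound is at most $\frac{18\cdot 3^m}{m \ln(18\cdot 3^m)}$. Clearing denominators and plugging in an explicit lower bound $\psi(m) \geq c\, m$, the inequality reduces to a comparison of the form $P(m) \leq \bigl(3/e^{\psi(m)/m}\bigr)^m$ with $P$ a polynomial expression in $m$, $\ln m$. Since $\psi(m)/m \to 1 < \ln 3$, the base on the right is strictly greater than $1$, so the exponential dominates from some explicit threshold $m = M$ onward. For $1 \leq m < M$ the quantities $d_m$, $H_m$, $g_m$ and $\pi(g_m)$ are all concrete small integers and the inequality can be verified case by case.

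The main obstacle lies in the transition region of small-to-moderate $m$: there the crude bound $d_m \leq 3^m$ is quite loose while the explicit Chebyshev/PNT estimates carry nonnegligible error terms, so the constants must be tracked carefully---likely via Dusart-type refinements---in order to keep $M$ small enough to make the finite verification painless. The generous leading constant $18$ in the lemma is presumably tuned precisely so that this finite range remains manageable.
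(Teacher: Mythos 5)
Your approach is essentially the paper's: both prove the bound by combining the Rosser--Schoenfeld estimate $d_m = e^{\psi(m)} \le e^{1.04m}$ with the crude bound $H_m \le 1 + \ln m$ on the harmonic sum, applying an explicit Chebyshev-type upper bound for $\pi$, and then closing the gap via an exponential-versus-polynomial comparison for large $m$ together with a finite direct verification for small $m$. The paper's bookkeeping is slightly different---it first establishes the clean intermediate bound $g_m < \tfrac{15}{m}\cdot 3^m$ for all $m$ (checking $m \le 45$ by hand), and then applies the sharper form $\pi(u) \le \tfrac{u}{\ln u}\bigl(1 + \tfrac{3}{2\ln u}\bigr)$ to $u = 15\cdot 3^m/m$, which makes the chain of inequalities valid from $m \ge 24$ with only $m \le 23$ left to check directly---whereas you keep $\psi(m)$ in the expressions and use the cruder $\pi(x) \le 1.26\,x/\ln x$, which would push the finite-check threshold higher. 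One caveat on your alternatives: Hanson's $d_m \le 3^m$ alone cannot work here, since you need $3/e^{\psi(m)/m}$ to be bounded away from $1$, which requires a genuine upper bound $\psi(m) \le Cm$ with $C < \ln 3$; the Rosser--Schoenfeld constant $1.04$ is what makes the exponential comparison close, so that is the route you must take (as you do in the displayed bound).
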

\begin{proof}
From~\cite[p.~228]{Rosser3} we know that $d_m = \exp(\psi(m)) \leq \exp(1.04 m)$. On the other hand,
\[
\sum_{j=1}^m \frac{1}{j} \leq 1 + \int_1^m \frac{\mathrm{d}t}{t} = \ln(em) .
\]
Hence, $g_m \leq \exp(1.04 m) \ln(em)$. This implies, by a quick computation, that $g_m<\frac{15}{m}\cdot 3^m$
for all integers $m\geq 46$, and a direct computation shows that this bound also holds for $1\leq m\leq 45$.
Then, writing $u:=15\cdot 3^m/m$ by~\cite[Th.~1]{RosserSchoenfeld}  we have
\[
\pi(g_m)\leq \pi(u)
 \leq \frac{u}{\ln u}\Big(1+\frac{3}{2\ln u}\Big)
 \leq \frac{18\cdot 3^m}{m\ln(18\cdot 3^m)}
\]
for $m\geq 24$. The lemma then follows by direct verification of the cases $1\leq m\leq 23$.

\end{proof}

We say that a prime $p$ is \emph{compatible} with $m$ if $p$ does not divide the numerator of any rational
of the form $1/m-\sum_{j = 1}^{m-1} w_j / j$ with $w_j\in \{ -1, 0, +1 \}$.
Thus, if $m\notin\UU$ then $1/m-\sum_{j = 1}^{m-1} w_j / j=0$ for some choice of $w_j$ and no prime is
compatible with $m$. On the other hand, if $m\in\UU$ and $p > g_m$ then $p$ is compatible with $m$.
The following result shows how to generate many elements in $\UU$ in a recursive way, starting with the
evident fact that $1\in\UU$.
\begin{lemma}\label{lem:2C}
Let $m$ be in $\UU$ and $k$ be any positive integer.
Then $mp^k\in \UU$, for any prime $p$ compatible with $m$. In particular, this holds for any prime $p>g_m$.
\end{lemma}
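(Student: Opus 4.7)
The plan is to establish, by induction on $k$, the slightly stronger statement: if $m\in\UU$ and $p$ is compatible with $m$, then $mp^k\in\UU$ \emph{and} $p$ is still compatible with $mp^k$. The reason for carrying the stronger conclusion is that the natural $p$-adic argument delivers both claims simultaneously, and the induction step is then simply the base case applied with $m$ replaced by $mp^{k-1}$.

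For the base case $k=1$, I would fix $w_1,\dots,w_{mp-1}\in\{-1,0,+1\}$ and set
\[
L := \frac{1}{mp} - \sum_{j=1}^{mp-1}\frac{w_j}{j}.
\]
I would split the sum into terms with $p\nmid j$, whose contribution I call $X$, and terms with $p\mid j$, writing $j=pi$ with $1\le i\le m-1$. After multiplying through by $p$, the identity becomes
\[
pL \;=\; Y - pX, \qquad Y := \frac{1}{m} - \sum_{i=1}^{m-1}\frac{w_{pi}}{i}.
\]
Here $X$ has denominator coprime to $p$, so $v_p(pX)\geq 1$; meanwhile $m\in\UU$ gives $Y\neq 0$, and compatibility of $p$ with $m$ forces the numerator of $Y$ in lowest terms to be coprime to $p$, so $v_p(Y)\leq 0$. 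Comparing $p$-adic valuations yields $v_p(pL)=v_p(Y)\leq 0$, hence $v_p(L)\leq -1$. This excludes $L=0$ (so $mp\in\UU$) and, because $v_p(L)\leq -1$ forces the numerator of $L$ in lowest terms to be coprime to $p$, also shows that $p$ is compatible with $mp$. The induction on $k$ then closes by applying this base case to $(m,p)$ replaced by $(mp^{k-1},p)$.

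For the final assertion that any prime $p>g_m$ is compatible with $m$, I would clear denominators: the expression $1/m-\sum_{j=1}^{m-1}w_j/j$ equals a fraction with denominator $d_m$ and numerator of absolute value at most
\[
\frac{d_m}{m}+\sum_{j=1}^{m-1}\frac{d_m}{j} \;=\; d_m\sum_{j=1}^{m}\frac{1}{j} \;=\; g_m.
\]
Reducing to lowest terms can only shrink the numerator in absolute value, and the result is nonzero because $m\in\UU$, so no prime exceeding $g_m$ can divide it.

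I do not anticipate a substantive obstacle: the whole proof is a clean $p$-adic valuation comparison. The only mild subtlety is recognizing the correct strengthening of the statement so that the induction closes---namely, tracking the compatibility of $p$ alongside membership in $\UU$.
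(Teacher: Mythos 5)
Your proof is correct, and it takes a genuinely different route from the paper's. The paper proves the result in a single step: it assumes $mp^k\notin\UU$, splits the sum $\sum_{n<mp^k}w_n/n=1/(mp^k)$ according to whether $p^k\mid n$, and multiplies through by $p^kD$ (with $D$ the denominator of $\frac1m-\sum_{j<m}w_{p^kj}/j$) to conclude that $p$ divides the numerator of that expression --- contradicting compatibility. No induction is needed because the split is by $p^k$ rather than $p$; the price is that the divisibility of the left-hand side by $p$ is asserted rather tersely and requires the reader to note that $\sum_{p^k\nmid n}w_n/n$ has $p$-adic valuation at least $-(k-1)$. Your approach instead splits by $p\mid j$ and reduces $k$ one step at a time, which forces you to carry the additional inductive invariant that $p$ stays compatible with $mp^{k-1}$; in exchange, the $p$-adic valuation comparison $v_p(Y-pX)=v_p(Y)\le 0$ makes both conclusions (nonvanishing and preserved compatibility) fall out simultaneously and transparently. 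You also supply the (easy, and in the paper unproved) verification that $p>g_m$ implies compatibility, by bounding the unreduced numerator by $g_m$. Both proofs are $p$-adic in spirit; the paper's is shorter, yours is more modular and arguably easier to discover since the strengthening of the induction hypothesis is exactly what makes the natural split by $p$ close up.
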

\begin{proof}
Suppose by contradiction that there exist $m$, $p$ and $k$ as above such that $N := mp^k \notin \UU$. %
Hence, by definition, there exist $w_1, \dots, w_{N - 1}$ in $\{-1,0,+1\}$ such that
\[
\sum_{n=1}^{N-1} \frac{w_n}{n} = \frac{1}{N} = \frac{1}{mp^k}.
\]
We split the sum according to whether $p^k\mid n$ or not and multiplying by $p^kD$, where
$D$ is the denominator of $\frac{1}{m}-\sum_{j=1}^{m-1} \frac{w_{p^kj}}{j}$. We obtain
\[
p^kD\sum_{\substack{n=1 \\ p^k\nmid\, n}}^{N-1} \frac{w_n}{n}
= D\Bigl(\frac{1}{m} - \sum_{j = 1}^{m - 1} \frac{w_{p^kj}}{j}\Bigr).
\]
The right-hand side is the numerator of $\frac{1}{m}-\sum_{j=1}^{m-1} \frac{w_{p^kj}}{j}$ and is divisible
by $p$, since the left-hand side is. It follows that $p$ is not compatible with $m$, which is a contradiction.
\end{proof}
Notice that, since $g_1=1$, the case $m=1$ in Lemma~\ref{lem:2C} already proves that all prime powers
are elements of $\UU$.
\begin{lemma}\label{lem:4C}
We have
\begin{multline}\label{equ:U100}
\UU(100) = \{
1, 2, 3, 4, 5, 7, 8, 9, 10, 11, 13, 14, 16, 17, 19, 22, 23, 25, 26, 27, 29,\\
31, 32, 34, 37, 38, 39, 41, 43, 46, 47, 49, 50, 51, 53, 57, 58, 59, 61, 62,\\
64, 67, 69, 71, 73, 74, 79, 81, 82, 83, 86, 87, 89, 92, 93, 94, 97, 98
\}.
\end{multline}
\end{lemma}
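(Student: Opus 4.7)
The plan is to verify the two inclusions $\UU(100) \supseteq S$ and $\UU(100) \subseteq S$, where $S$ denotes the explicit set displayed in \eqref{equ:U100}. Neither direction requires new ideas beyond what the excerpt already provides: the forward inclusion is handled by the recursion of Lemma~\ref{lem:2C} and the reverse by exhibiting explicit witnesses.

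For $\UU(100) \supseteq S$, I would exploit the recursive structure of Lemma~\ref{lem:2C}. Starting from the trivial fact that $1 \in \UU$ and noting $g_1 = 1$, every prime is compatible with $1$, so Lemma~\ref{lem:2C} places all prime powers at most $100$ in $\UU$ simultaneously. The remaining composites in $S$ are obtained inductively: writing $N = m p^k$ with $m \in S$ already established to lie in $\UU$ and $p$ a prime not dividing $m$, either $p > g_m$ (in which case compatibility is automatic) or $p \leq g_m$ (in which case compatibility is verified directly by confirming that $p$ does not divide the numerator of $1/m - \sum_{j<m} w_j/j$ for any $w_j \in \{-1,0,+1\}$). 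Since each such $m$ satisfies $m \leq 50$ and only finitely many small primes need to be checked, this is a bounded computation.

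For $\UU(100) \subseteq S$, I would, for every $N \in \{1, \ldots, 100\} \setminus S$, exhibit a representation
\[
\frac{1}{N} = \sum_{n=1}^{N-1} \frac{w_n}{n}, \qquad w_n \in \{-1, 0, +1\},
\]
which by the very definition of $\UU$ forces $N \notin \UU$. Most of these $N$ admit the simple two-term identity $1/N = 1/a - 1/b$ with $a < b < N$ and $ab = N(b-a)$, for instance $1/6 = 1/2 - 1/3$, $1/12 = 1/3 - 1/4$, $1/15 = 1/6 - 1/10$, $1/20 = 1/4 - 1/5$. The remaining values, if any, are resolved by a short search for three- or four-term signed representations.

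The main obstacle is organizational rather than conceptual: Lemma~\ref{lem:4C} is a purely finite combinatorial assertion whose proof is inherently a computation. The task is to present the verification compactly, for example via a chain of applications of Lemma~\ref{lem:2C} for the ``in $\UU$'' direction and a table of explicit signed Egyptian identities for the ``not in $\UU$'' direction, so that the check can be reproduced by hand or delegated to a computer algebra system.
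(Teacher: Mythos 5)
Your proposal follows essentially the same two-step structure as the paper's proof: Lemma~\ref{lem:2C} (applied starting from $1\in\UU$ and then bootstrapping through $m=2,3,4$, checking compatibility directly when $p\le g_m$, e.g.\ for $92=4\cdot 23$) establishes $S\subseteq\UU(100)$, and explicit signed representations $1/N=\sum_{n<N} w_n/n$ certify $N\notin\UU$ for every remaining $N\le 100$. The only difference is that the paper actually tabulates the identities, whereas you defer the finite search; this is fine as the lemma is inherently a bounded verification.
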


\begin{proof}
First, we list the numbers that belong to $\UU(100)$ thanks to Lemma~\ref{lem:2C}.
\begin{List}
\item We know that $1$ is in $\UU$.
\item $g_1= 1$, hence if $2\leq p\leq 100$, $k\geq 1$ and $N=p^k$, then $N\in \UU$.
\item $g_2= 3$, hence if $5\leq p\leq 47$, $k\geq 1$ and $N=2p^k$, then $N\in \UU$.
\item $g_3= 11$, hence if $13\leq p\leq 31$ and $N=3p$, then $N\in \UU$.
\item $23$ is compatible with $4$, hence $92=4\cdot 23$ is in $\UU$.
\end{List}
This proves that $\UU(100)$ contains all the numbers listed in~\eqref{equ:U100}. We now show that
no other number belongs to $\UU(100)$.

Every multiple of $6$, $15$, $20$, $21$, $28$, $33$, $35$, or $44$ does not belong to $\UU$ because
\begin{center}
\begin{tabular}{lll}
    $\displaystyle\frac{1}{6}  = \frac{1}{2} -\frac{1}{3}$,  &
    $\displaystyle\frac{1}{15} = \frac{1}{6} -\frac{1}{10}$, &
    $\displaystyle\frac{1}{20} = \frac{1}{4} -\frac{1}{5}$, \\[2.5ex]
    \hspace{-0.5em}
    $\displaystyle\frac{1}{21} = \frac{1}{7} +\frac{1}{14}-\frac{1}{6}$,  &
    $\displaystyle\frac{1}{33} = \frac{1}{6} -\frac{1}{11}-\frac{1}{22}$, &
    $\displaystyle\frac{1}{44} = \frac{1}{33}+\frac{1}{12}-\frac{1}{11}$.
\end{tabular}
\end{center}
Furthermore, $52$, $55$, $65$, $68$, $76$, $85$, $91$, and $95$ do not belong to $\UU$ because
\begin{center}
    \begin{tabular}{llll}
        $\displaystyle\frac{1}{52}= \frac{1}{12}-\frac{1}{26}-\frac{1}{39}$, &
        $\displaystyle\frac{1}{55}=-\frac{1}{20}+\frac{1}{22}+\frac{1}{44}$, &
        $\displaystyle\frac{1}{65}=-\frac{1}{10}+\frac{1}{13}+\frac{1}{26}$, \\[2.5ex]
        $\displaystyle\frac{1}{68}= \frac{1}{4} -\frac{1}{6} -\frac{1}{17}-\frac{1}{34}+\frac{1}{51}$, &
        $\displaystyle\frac{1}{76}= \frac{1}{12}-\frac{1}{19}-\frac{1}{57}$, &
        $\displaystyle\frac{1}{85}= \frac{1}{10}-\frac{1}{17}-\frac{1}{34}$, \\[2.5ex]
        $\displaystyle\frac{1}{91}= \frac{1}{42}-\frac{1}{78}$,              &
        $\displaystyle\frac{1}{95}= \frac{1}{20}-\frac{1}{38}-\frac{1}{76}$.
    \end{tabular}
\end{center}
The claim is proved.

\end{proof}
\begin{lemma}\label{lem:5C}
Let $x$ be a real number.
Then
\begin{align}
\card{\UU(x)}&\geq \phantom{\frac{11}{11}} 2\,\frac{x}{\ln x}
\qquad\text{ if } x\geq 13,                            \label{eq:2C}\\
\card{\UU(x)}&\geq \frac{137}{60}           \,\frac{x}{\ln x}
\qquad\text{ if } x\geq 1000.                          \label{eq:3C}
\end{align}
\end{lemma}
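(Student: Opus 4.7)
The plan is to build large families of elements of $\UU$ via Lemma~\ref{lem:2C}: for $m\in\UU$ and any prime $p>g_m$, every product $mp^k$ with $k\geq 1$ lies in $\UU$. Since $1,2,3,4,5$ are prime powers, each lies in $\UU$, and one computes $g_1=1,\ g_2=3,\ g_3=11,\ g_4=25,\ g_5=137$. For $m\in\{1,2,3,4,5\}$ set
\[
F_m(x):=\{mp^k\leq x\colon k\geq 1,\ p\ \text{prime},\ p>g_m\}\subset\UU\cap[1,x].
\]
These families are pairwise disjoint: an identity $mp^k=m'q^l$ (for distinct $m,m'$ in the list, with $p\nmid m$ and $q\nmid m'$) would force either a prime power to equal a non-prime-power or two differing $2$-adic valuations, a contradiction. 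Keeping only the $k=1$ terms, $|F_m(x)|\geq\pi(x/m)-\pi(g_m)$ with $\pi(g_m)\in\{0,2,5,9,33\}$ for $m=1,\dots,5$.

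For~\eqref{eq:3C}, summing over $m=1,\dots,5$ yields $|\UU(x)|\geq\sum_{m=1}^5\pi(x/m)-49$. Applying the explicit bound $\pi(y)\geq y/\ln y$ (Rosser--Schoenfeld, valid for $y\geq 17$, hence for each $x/m$ whenever $x\geq 1000$) together with $\ln(x/m)\leq\ln x$ gives
\[
\sum_{m=1}^5 \pi(x/m)\geq\sum_{m=1}^5\frac{x/m}{\ln(x/m)}\geq\frac{x}{\ln x}\sum_{m=1}^5\frac{1}{m}=\frac{137}{60}\cdot\frac{x}{\ln x}.
\]
To absorb the ``$-49$'' I would combine (i) the slack in the middle inequality above, namely $\frac{x}{\ln x}\sum_{m=2}^5\frac{\ln m}{m\ln(x/m)}$, which at $x=1000$ is about $35$, with (ii) the higher prime powers ($k\geq 2$) that are included in each $F_m$ but discarded in the naive count, most importantly the $\pi(\sqrt{x})\sim 11$ prime squares in $F_1$; together they exceed $49$ for $x\geq 1000$. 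For~\eqref{eq:2C}, the analogous argument with $m\in\{1,2,3,4\}$ gives coefficient $1+1/2+1/3+1/4=25/12>2$ and a correction $-16=-\pi(3)-\pi(11)-\pi(25)$: the excess $(25/12-2)x/\ln x=x/(12\ln x)$ plus the same slack-plus-higher-powers contribution absorbs the correction for $x$ above a threshold of order a few hundred. For the remaining range $13\leq x$ up to that threshold, I would verify the inequality directly from the explicit list of $\UU\cap[1,100]$ in Lemma~\ref{lem:4C} (possibly extended up to a few hundred), using that $|\UU(x)|$ is constant on each interval $[n,n{+}1)$ while $2x/\ln x$ is increasing for $x\geq e$, so only finitely many integer checks are required.

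The main technical obstacle is the tight interplay between the correction terms ($-16$, $-49$) and the available slack: the naive Chebyshev bound alone barely suffices at the thresholds $x\approx 100$ and $x=1000$, so one must carefully account for the secondary contributions (higher prime powers, and perhaps sporadic compatible primes $p<g_m$ such as $p=23$ with $m=4$ from the proof of Lemma~\ref{lem:4C}), or use a sharper estimate such as Dusart's $\pi(y)\geq(y/\ln y)(1+1/\ln y)$ valid for $y\geq 599$. Lemma~\ref{lem:3C} would come in if one wanted to include further auxiliary $m$'s to widen the margin.
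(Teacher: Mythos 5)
Your approach is essentially the same as the paper's: both produce lower bounds for $\card{\UU(x)}$ by counting $mp^k\in\UU$ for $m\in\{1,\dots,5\}$ and primes $p>g_m$ via Lemma~\ref{lem:2C}, invoke the Rosser--Schoenfeld bound $\pi(t)\geq t/\ln t$ for $t\geq 17$, and handle small $x$ by consulting the explicit list $\UU(100)$ from Lemma~\ref{lem:4C}. The one place you diverge is in how the correction term is absorbed. You arrive at corrections $-16$ (for $m\leq 4$) and $-49$ (for $m\leq 5$) by keeping only $k=1$, and then propose to pay for them using the slack from $\ln(x/m)\leq\ln x$ plus a loosely estimated count of higher prime powers $p^k$, $k\geq 2$; as you note, this would require some fiddly bookkeeping near $x=1000$ and an extension of the explicit list beyond $100$ for the first inequality. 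The paper instead immediately adds back fourteen concrete elements of $\UU(100)$ that are not of the form $mp$ for $m\leq 5$ (namely $1,4,8,9,16,25,27,32,49,50,64,81,92,98$), reducing the constants to $-2$ and $-35$; this makes the remaining step a clean elementary inequality in $x$ alone, holds already for all $x>100$, and avoids the need to quantify the $\ln(x/m)$ slack or to extend the table past $N=100$. Both routes lead to the same result, but the paper's bookkeeping is tighter and closes the argument without extra threshold verification.
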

\begin{proof}
The explicit description of $\UU(100)$ in Lemma~\ref{lem:4C} implies~\eqref{eq:2C} for $x\leq 100$.
Hence, assume that $x > 100$. Since $1$, $2$, $3$ and $4$ belong to $\UU$, we find that all elements of
the form $p$, $2p$ (with $p>g_3=3$), $3p$ (with $p>g_3=11$) and $4p$ (with $p>g_4=25$) belong to $\UU$.
In $\UU(x)$ we thus have $\pi(x)$ elements of the first type, $\pi(x/2)-\pi(3)$ elements of the second,
$\pi(x/3)-\pi(11)$ elements of the third and $\pi(x/4)-\pi(25)$ elements of the fourth, i.e.,
$\pi(x) + \pi(x/2) + \pi(x/3) + \pi(x/4) - 16$ elements of one of the above mentioned forms.
Moreover, $1$, $4$, $8$, $16$, $32$, $64$, $9$, $27$, $81$, $92$, $25$, $49$, $50$ and $98$ belong to
$\UU$ and are not of the aforementioned form. Hence for $x\geq 100$, we have
\[
\card{\UU(x)}
\geq \pi(x) + \pi\Big(\frac{x}{2}\Big) + \pi\Big(\frac{x}{3}\Big) + \pi\Big(\frac{x}{4}\Big) - 2.
\]
In~\cite[Th.~2, Cor.~1]{RosserSchoenfeld} it is proved that $\pi(t)\geq t/\ln t$ when $t\geq 17$. Thus,
\[
\card{\UU(x)}
\geq \frac{x}{\ln x} + \frac{x/2}{\ln(x/2)} + \frac{x/3}{\ln(x/3)} + \frac{x/4}{\ln(x/4)} - 2
\geq 2\,\frac{x}{\ln x},
\]
where the last inequality is elementary and proves the claim.

Assume now $x\geq 1000$. Since $5\in\UU$, we also have elements of the form $5p$ (with $p>g_5=137$) in
$\UU$, and there are $\pi(x/5)-\pi(137)=\pi(x/5)-33$ elements of this form.

Adding these terms to the previous lower bound, we find that
\[
\card{\UU(x)}
\geq \frac{x}{\ln x} + \frac{x/2}{\ln(x/2)} + \frac{x/3}{\ln(x/3)} + \frac{x/4}{\ln(x/4)} + \frac{x/5}{\ln(x/5)} - 35
\geq \frac{137}{60}\,\frac{x}{\ln x},
\]
where the last inequality is once again elementary and proves~\eqref{eq:3C}.

\end{proof}
\begin{lemma}\label{lem:6C}
Assume $y\geq 1$ and $x\geq 18\cdot 3^y$. Then
\begin{equation}\label{eq:4C}
\card{\UU(x)} \geq \frac{x}{\ln x}  \int_{1}^{y} \frac{\card{\UU(v)}}{v^2} \dd v.
\end{equation}
\end{lemma}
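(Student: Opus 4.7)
My plan is to produce many elements of $\UU(x)$ by multiplying elements of $\UU(y)$ by suitable primes, exploiting Lemma~\ref{lem:2C}. For each $m\in\UU(y)$ and each prime $p$ satisfying $g_m<p\leq x/m$, Lemma~\ref{lem:2C} gives $mp\in\UU$, and evidently $mp\leq x$. The key injectivity observation I would use is that, since $p>g_m\geq m$, the prime $p$ exceeds every prime divisor of $m$ and is therefore the largest prime factor of $mp$; consequently the pair $(m,p)$ is uniquely recoverable from the product. This yields the combinatorial lower bound
\[
|\UU(x)| \geq \sum_{m\in\UU(y)} \bigl(\pi(x/m) - \pi(g_m)\bigr).
\]

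Next, I would apply the prime-counting estimates already used in the proof of Lemma~\ref{lem:5C}: the Rosser--Schoenfeld bound $\pi(t)\geq t/\ln t$ for $t\geq 17$ -- which applies to $t=x/m$ since $x/m\geq x/y\geq 18\cdot 3^y/y\geq 54$ for every $y\geq 1$ and integer $m\leq y$ -- gives $\pi(x/m)\geq x/(m\ln x)$, while Lemma~\ref{lem:3C} provides $\pi(g_m)\leq 18\cdot 3^m/(m\ln(18\cdot 3^m))$. The monotonicity of $t\mapsto t/\ln t$ on $[e,\infty)$, together with $x\geq 18\cdot 3^y$, then yields $18\cdot 3^m/\ln(18\cdot 3^m)\leq x/\ln x$ for every integer $m\leq y$. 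Writing the integral as a sum over the jump intervals of the step function $|\UU(v)|$ furnishes the elementary identity
\[
\sum_{n\in\UU(y)} \frac{1}{n} = \frac{|\UU(y)|}{y} + \int_1^y \frac{|\UU(v)|}{v^2}\,dv,
\]
which, after substitution, reduces the target inequality to
\[
\sum_{m\in\UU(y)} \pi(g_m) \leq \frac{|\UU(y)|\,x}{y\,\ln x}.
\]

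The main obstacle will be establishing this last inequality uniformly in $y$. Setting $a_m:=18\cdot 3^m/(m\ln(18\cdot 3^m))$, a direct computation shows that the ratio $a_{m-1}/a_m$ is uniformly bounded by about $0.85$ for all $m\geq 2$ (and tends to $1/3$ as $m\to\infty$), so $\sum_{m\leq\lfloor y\rfloor} a_m$ is at most a modest constant $C$ times its largest term $a_{\lfloor y\rfloor}\leq x/(\lfloor y\rfloor\ln x)$. The inequality then becomes a condition of the shape $|\UU(y)|\geq Cy/\lfloor y\rfloor$, which holds once $y$ is beyond a small explicit threshold thanks to the lower bounds for $|\UU(y)|$ supplied by Lemma~\ref{lem:5C}. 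For the residual small values of $y$, the integral $\int_1^y |\UU(v)|/v^2\,dv$ is itself bounded by a small explicit constant, and Lemma~\ref{lem:5C}'s bound $|\UU(x)|\geq 2x/\ln x$ then already dominates the desired right-hand side directly, closing the proof in those remaining cases.
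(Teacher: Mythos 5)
Your proposal is correct and begins exactly as the paper does: the same combinatorial lower bound $|\UU(x)| \geq \sum_{m\in\UU(y)}\bigl(\pi(x/m)-\pi(g_m)\bigr)$ from Lemma~\ref{lem:2C}, the same Rosser--Schoenfeld bound $\pi(t)\geq t/\ln t$, and the same upper bound for $\pi(g_m)$ from Lemma~\ref{lem:3C}. Where you diverge is in how the two pieces are merged. The paper applies Stieltjes/Abel summation to the \emph{entire} quantity $\sum_{m\in\UU(y)}\bigl(\tfrac{x}{m\ln x}-\tfrac{18\cdot 3^m}{m\ln(18\cdot 3^m)}\bigr)$ simultaneously, so the boundary term at $y$ becomes $\tfrac{|\UU(y)|}{y}\bigl(\tfrac{x}{\ln x}-\tfrac{18\cdot 3^y}{\ln(18\cdot 3^y)}\bigr)$, which is nonnegative by monotonicity of $t/\ln t$, and the remaining integral of $|\UU(v)|$ against the derivative of $\tfrac{18\cdot 3^v}{v\ln(18\cdot 3^v)}$ has the right sign for $v\geq 2$ with the $v\in[1,2)$ piece checked by hand; thus the unwanted terms are absorbed automatically with no case analysis. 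You instead Abel-sum only $\sum 1/m$ and treat $\sum\pi(g_m)$ separately, reducing the claim to $\sum_{m\in\UU(y)}\pi(g_m)\leq |\UU(y)|x/(y\ln x)$, which you then establish by a geometric-series bound on $a_m:=18\cdot 3^m/(m\ln(18\cdot 3^m))$ together with Lemma~\ref{lem:5C} for the residual small $y$. Both routes are valid; yours needs a bit more explicit bookkeeping than your sketch supplies (the ratio $a_1/a_2\approx 0.85$ gives $C\approx 20/3$, so the condition $|\UU(\lfloor y\rfloor)|\geq Cy/\lfloor y\rfloor$ holds for $\lfloor y\rfloor\geq 9$, while for $y<9$ one has $\int_1^y |\UU(v)|/v^2\,\dd v<1.8<2$, so the $2x/\ln x$ bound suffices), but all the required verifications do go through. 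The paper's combined integration by parts is tighter and avoids this splitting, which is why it does not need an auxiliary constant nor a two-regime argument.
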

\begin{proof}
By Lemmas~\ref{lem:2C} and~\ref{lem:3C} we have
\begin{align*}
\card{\UU(x)}
&\geq \sum_{m\in \UU(y)} \Big(\pi\!\Big(\frac{x}{m}\Big) - \pi(g_m)\Big)
 \geq \sum_{m\in \UU(y)} \Big(\pi\!\Big(\frac{x}{m}\Big) - \frac{18\cdot 3^m}{m\ln(18\cdot 3^m)}\Big).
\end{align*}
Since $x/m\geq x/y\geq 18\cdot 3^y/y\geq 17$ for every $m\leq y$, we have $\pi(x/m)\geq
\frac{x/m}{\ln(x/m)}\geq \frac{x/m}{\ln x}$ (by~\cite[Th.~2, Cor.~1]{RosserSchoenfeld}), hence
\begin{align*}
\card{\UU(x)}&\geq \sum_{m\in \UU(y)} \Big(\frac{x}{m\ln x} - \frac{18\cdot 3^m}{m\ln(18\cdot 3^m)}\Big).
\end{align*}
We write the right-hand side as a Stieltjes integral, so that by partial summation it becomes
\begin{align*}
&\int_{1^-}^{y^+} \Big(\frac{x}{v\ln x} - \frac{18\cdot 3^v}{v\ln(18\cdot 3^v)}\Big) \dd \card{\UU(v)} \\
&\hspace{5em}= \Big(\frac{x}{v\ln x} - \frac{18\cdot 3^v}{v\ln(18\cdot 3^v)}\Big)\card{\UU(v)}\Big|_{1^-}^{y^+}
  - \int_{1}^{y} \card{\UU(v)}
      \Big(
       - \frac{x}{v^2\ln x}
       - \frac{\dd}{\dd v}\Big(\frac{18\cdot 3^v}{v\ln(18\cdot 3^v)}\Big)
      \Big) \dd v\\
&\hspace{5em} = \Big(\frac{x}{\ln x} - \frac{18\cdot 3^y}{\ln(18\cdot 3^y)}\Big)\frac{\card{\UU(y)}}{y}
  + \frac{x}{\ln x}\int_{1}^{y} \frac{\card{\UU(v)}}{v^2}\dd v
  + \int_{1}^{y}
      \card{\UU(v)} \frac{\dd}{\dd v}\Big(\frac{18\cdot 3^v}{v\ln(18\cdot 3^v)}\Big) \dd v.
\end{align*}
The first term is positive, since $e<18\cdot 3^y\leq x$. Also the derivative in the last integral is
positive for $v\geq 2$.
Thus, assuming that $y\geq 2$ we get
\begin{align*}
\card{\UU(x)}
&\geq \frac{x}{\ln x}\int_{1}^{y} \frac{\card{\UU(v)}}{v^2}\dd v
  + \int_{1}^{2}
      \card{\UU(v)}\frac{\dd}{\dd v}\Big(\frac{18\cdot 3^v}{v\ln(18\cdot 3^v)}\Big) \dd v.
\intertext{Since $\card{\UU(y)} = 1$ for $y\in [1,2)$, this is}
&=   \frac{x}{\ln x}\int_{1}^{y} \frac{\card{\UU(v)}}{v^2}\dd v
  + \frac{18\cdot 3^v}{v\ln(18\cdot 3^v)}\Big|_{1}^{2}
\geq \frac{x}{\ln x}\int_{1}^{y} \frac{\card{\UU(v)}}{v^2}\dd v,
\end{align*}
which concludes the proof in this case. On the other hand, when $y\in[1,2)$ the claim states
\[
\card{\UU(x)}
\geq \frac{x}{\ln x}\int_{1}^{y}\frac{\card{\UU(v)}}{v^2}\dd v
=    \frac{x}{\ln x}\Big(1-\frac{1}{y}\Big),
\]
which is true by Lemma~\ref{lem:5C}.
\end{proof}
We want to simplify the integral recursion for $\card{\UU(x)}$ described in Lemma~\ref{lem:6C}. For this
purpose, let $G$ be defined by
\begin{equation}\label{eq:5C}
G(z) := \frac{\ln x}{x}\card{\UU(x)},
\quad\text{where }
x := \exp(\exp (z)).
\end{equation}
We set $y:=e^{z-1/4}$, so that the condition $x=e^{(1-e^{-1/4}\ln 3)e^z}\cdot 3^y\geq 18\cdot 3^y$ and
$y\geq 1$ are verified for $z\geq 3$.

Since $\ln\ln y=\ln(z-1/4)$, we can rewrite~\eqref{eq:4C} as
\[
G(z) \geq \int_{-\infty}^{\ln(z-1/4)} \hspace{-0.5cm} G(w) \dd w\quad \forall z\geq3.
\]

The part of the integral with $w\leq 1$ contributes $\int_{-\infty}^{1} G(w)\dd w {=}
\int_{1}^{\exp(\exp(1))} \frac{\card{\UU(v)}}{v^2}\dd v
\geq 2.2$ by Lemma~\ref{lem:4C} and a direct computation. Hence,
\begin{equation}\label{eq:6C}
G(z)
\geq 2.2 + \int_{1}^{\ln(z - 1/4)} \hspace{-0.5cm} G(w) \dd w
\quad
\forall z\geq 3.
\end{equation}
For any $k\geq 1$, let $h_k$ be defined recursively as $h_1:=1$ and
$h_k:=\exp(h_{k-1})+1/4$ for $k\geq 2$, and let $T_k\colon [0, +\infty)\to \R$ be the function defined
by the identities
\[
\begin{aligned}
T_{1}(z) &:= \ln z                                                         && \text{if } z > h_1, \\
T_{k}(z) &:= \int_{h_{k-1}}^{\ln(z - 1/4)} \hspace{-0.5cm} T_{k-1}(w)\dd w && \text{if } z > h_k
\quad \forall k\geq 2,
\end{aligned}
\]
and $0$ everywhere else.
By induction on $k$, one verifies that $h_k \in [e_{k-1},e_k]$ and that each $T_k$ is a continuous and
increasing function which is zero in $[0,h_k]$ and positive in $(h_k,+\infty)$.
We note that the definition of $T_k$ with $k\geq 2$ is modeled on~\eqref{eq:6C}, where we removed the
additive constant $2.2$ that appears there, since preliminary calculations show that its presence leads to
various complications that are not compensated by any truly significant improvement.

The following lower bound essentially comes from~\eqref{eq:6C}.
\begin{lemma}\label{lem:7C}
Let $k\geq 1$. Then for all $z>0$ we have
\begin{equation}\label{eq:7C}
G(z) \geq 2\,T_k(z).
\end{equation}
\end{lemma}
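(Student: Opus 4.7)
The plan is to argue by induction on $k$. For every $k$, the inequality is trivial when $z \leq h_k$ since then $T_k(z)=0$ while $G(z)\geq 0$, so I only need to treat $z>h_k$.

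For the base case $k=1$, I need $G(z)\geq 2\ln z$ for $z>1$, and I split at $z=3$. On the bounded range $(1,3]$, Lemma~\ref{lem:5C} already suffices: for $z>1$ one has $x=\exp(\exp z)>e^e>13$, so~\eqref{eq:2C} gives $G(z)\geq 2$, which is $\geq 2\ln z$ throughout $z\leq e$; for $e<z\leq 3$ one has $x>e^{e^e}>10^3$, so~\eqref{eq:3C} gives $G(z)\geq 137/60>2\ln 3\geq 2\ln z$. For $z\geq 3$, I apply~\eqref{eq:6C} with the constant lower bound $G(w)\geq 2$ on the integrand (valid for $w\geq 1$), obtaining $G(z)\geq 2\ln(z-1/4)+1/5$, which an elementary calculation shows to exceed $2\ln z$ for $z\geq 3$.

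For the inductive step $k\geq 2$, assume $G(w)\geq 2T_{k-1}(w)$ for every $w>0$ and fix $z>h_k$. The main case is $z\geq 3$: I apply~\eqref{eq:6C}, discard the additive constant $2.2\geq 0$, and restrict the range of integration to $[h_{k-1},\ln(z-1/4)]$. This restriction is legitimate because $G\geq 0$, because $h_{k-1}\geq h_1=1$, and because $z>h_k$ is equivalent to $\ln(z-1/4)>h_{k-1}$. The inductive hypothesis then yields
\[
G(z) \;\geq\; \int_{h_{k-1}}^{\ln(z-1/4)} G(w)\,\dd w \;\geq\; 2\int_{h_{k-1}}^{\ln(z-1/4)} T_{k-1}(w)\,\dd w \;=\; 2T_k(z).
\]

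The only remaining subcase is $h_k<z<3$, which can occur only for $k=2$ (for $k\geq 3$ one has $h_k\geq h_3=\exp(h_2)+1/4>3$). In this very short interval $T_2$ has only just started to grow from $0$ at $h_2\approx 2.97$, and the elementary estimate $T_2(z)\leq T_2(3)=\int_1^{\ln(11/4)}\ln w\,\dd w$ shows that $T_2(z)$ is far smaller than $1$, whereas $G(z)\geq 2$ by~\eqref{eq:2C}. The main technical nuisance of the whole argument lies precisely in this mismatch between the threshold $z\geq 3$ required by~\eqref{eq:6C} and the value $h_2<3$ at which $T_2$ becomes positive; fortunately, since $T_2$ is essentially zero on $(h_2,3)$, a crude lower bound on $G$ from Lemma~\ref{lem:5C} closes the gap.
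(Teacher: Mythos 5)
Your proof is correct and follows essentially the same route as the paper: you handle the trivial range $z\leq h_k$, establish the base case $k=1$ by splitting into $(1,e]$, $(e,3]$, $[3,\infty)$ using~\eqref{eq:2C},~\eqref{eq:3C} and~\eqref{eq:6C} exactly as the paper does, and then run the inductive step from~\eqref{eq:6C} by discarding the additive constant $2.2$ and restricting the integration range to $[h_{k-1},\ln(z-1/4)]$. The only organizational difference is that the paper treats $k=2$ and $k\geq 3$ as separate cases (since for $k\geq 3$ one automatically has $h_k>3$), while you handle $k\geq 2$ uniformly and isolate the single awkward subcase $h_2<z<3$; the underlying estimates are identical.
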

\begin{proof}
We first prove the statement for $k=1$. When $z\leq 1$ the claim is clear since $T_1(z)$ is zero there.
By~\eqref{eq:2C} we know that $G(z) \geq 2$ holds for $z\geq 1$ since $\ln\ln 13<1$. If $z >3$ the recursive
lower bound~\eqref{eq:6C} then implies that $G(z)\geq 2.2 + \int_1^{\ln(z - 1/4)}2\,\dd u = 2\ln(z-1/4) +
0.2\geq 2T_1(z)$.
In the intermediate ranges $[1,e]$ and $[e,3]$ we use~\eqref{eq:2C} in the first case, so that $G(z)\geq
2 = 2 T_1(e) \geq 2 T_1(z)$, and~\eqref{eq:3C} in the second case, so that $G(z)\geq 137/60 \geq 2 T_1(3)
\geq 2 T_1(z)$ once again.

Assume $k = 2$. If $z \leq h_2 = e + 1/4 = 2.96\dots$, then $G(z)\geq 2T_2(z)$ is trivially satisfied, as
the right-hand side is $0$ there, and the same lower bound also holds when $z \in [h_2,3]$ since
by~\eqref{eq:2C} we have $G(z)\geq 2 \geq 2 T_2(3) \geq 2 T_2(z)$.
When $z\geq 3$ we use~\eqref{eq:6C} and~\eqref{eq:7C}, in the case $k=1$, obtaining
\[
G(z)
\geq 2.2 + 2\int_{1}^{\ln(z - 1/4)} \hspace{-0.5cm} T_1(w) \dd w
\geq 2T_2(z).
\]

Finally, assume $k\geq 3$. Once again, the bound is trivially true when $z \leq h_k$. Let $z\geq h_k$.
Then $z\geq 3$ so that~\eqref{eq:6C} and the inductive hypothesis stating that $G(z)\geq 2T_{k-1}(z)$ give
\[
G(z)
\geq 2.2 + 2\int_{1}^{\ln(z - 1/4)} \hspace{-0.5cm} T_{k-1}(w) \dd w.
\]
The function inside the integral is zero when $w< h_{k-1}$, therefore we can replace the lower integral
with $h_{k-1}$ and the claim follows by discarding the additive constant $2.2$.
\end{proof}
\begin{lemma}\label{lem:8C}
Suppose that for a certain integer $k\geq 3$ there exists a positive constant $a_{k-1}\geq 1$ such that
\begin{equation}\label{eq:8C}
T_{k-1}(z) \geq \prod_{j=1}^{k-1} \ln_j z - a_{k-1}\prod_{j=1}^{k-2} \ln_j z
\end{equation}
when $z\geq e_{k-1}$. %
Then, we have
\begin{equation}\label{eq:9C}
T_k(z) \geq \prod_{j=1}^k \ln_j z - a_k\prod_{j=1}^{k-1} \ln_j z
\end{equation}
when $z \geq e_{k}$, with
\[
a_k := a_{k-1} + \frac{1}{e_{k-2}} + \frac{k-2}{e_{k-2}e_{k-3}}.
\]
\end{lemma}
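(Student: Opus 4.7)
The plan is to estimate the integral defining $T_k(z)=\int_{h_{k-1}}^{L}T_{k-1}(w)\,\dd w$, where $L:=\ln(z-1/4)$, by combining the inductive hypothesis with a careful integration by parts. Writing $P_n(w):=\prod_{j=1}^n\ln_j w$ for brevity, the target reads $T_k(z)\geq P_k(z)-a_k P_{k-1}(z)$ and the hypothesis reads $T_{k-1}(w)\geq P_{k-1}(w)-a_{k-1}P_{k-2}(w)$ on $[e_{k-1},\infty)$. Since $T_{k-1}\geq 0$ on $[h_{k-1},e_{k-1}]$, I would first discard that contribution and work from
\[
T_k(z)\geq \int_{e_{k-1}}^{L}\bigl[P_{k-1}(w)-a_{k-1}P_{k-2}(w)\bigr]\,\dd w.
\]

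Next I would integrate each piece by parts. The identity $\frac{\dd}{\dd w}P_n(w)=\frac{1}{w}\sum_{i=1}^n\prod_{j=i+1}^n\ln_j w$ yields
\[
\int_{e_{k-1}}^L P_n(w)\,\dd w=L P_n(L)-e_{k-1}P_n(e_{k-1})-\sum_{i=1}^n\int_{e_{k-1}}^L\prod_{j=i+1}^n\ln_j w\,\dd w.
\]
The principal terms are $L P_{k-1}(L)=P_k(z-1/4)$ and $a_{k-1}L P_{k-2}(L)=a_{k-1}P_{k-1}(z-1/4)$, already very close to the desired $P_k(z)$ and $a_{k-1}P_{k-1}(z)$. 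The boundary contributions at $e_{k-1}$ combine to $e_{k-1}P_{k-2}(e_{k-1})(1-a_{k-1})$, which is nonpositive since $a_{k-1}\geq 1$; after the outer minus sign it provides a nonnegative contribution to the lower bound.

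For the error integrals I would use monotonicity, $\int_{e_{k-1}}^L\prod_{j=i+1}^n\ln_j w\,\dd w\leq L\prod_{j=i+1}^n\ln_j L$, and then rewrite in $z$ via $\ln_j L=\ln_{j+1}(z-1/4)\leq \ln_{j+1}z$. The combined error then takes the form $P_{k-1}(z)\sum_{i=1}^{k-1}\ln_k z/\prod_{j=2}^{i+1}\ln_j z$. The key quantitative step is the estimate $\ln_k z/\ln_2 z\leq 1/e_{k-2}$ for $z\geq e_k$, which follows by checking that $z\mapsto\ln_k z/\ln_2 z$ is nonincreasing on $[e_k,\infty)$ (a short derivative computation) and attains $1/e_{k-2}$ at $z=e_k$. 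The analogous bound $\ln_k z/(\ln_2 z\ln_3 z)\leq 1/(e_{k-2}e_{k-3})$ handles the remaining $k-2$ terms, so the total error is at most $(a_k-a_{k-1})P_{k-1}(z)$.

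Assembling the pieces produces the stated inequality, modulo reconciling the small discrepancy from evaluating the leading terms at $z-1/4$ rather than $z$. The hardest part will be exactly this reconciliation: the lost quantity $P_k(z)-P_k(z-1/4)$ is of order $P_k(z)/z$ and must be absorbed either by the positive boundary contribution $e_{k-1}P_{k-2}(e_{k-1})(a_{k-1}-1)$ (when $a_{k-1}>1$) or by the strict slack in the ratio estimates available for $z>e_k$. The regime $\ln_k z\leq a_k$ is not an issue because the inductive claim is vacuous there (the right-hand side is nonpositive while $T_k\geq 0$), so the delicate accounting is only needed when $\ln_k z>a_k$, where the strict inequalities provide enough slack to close the argument.
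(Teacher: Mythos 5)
Your overall strategy (integration by parts plus elementary ratio estimates) is the same as the paper's, but you choose a different accounting scheme, and you leave the most delicate step open. The paper immediately resolves the $\ln(z-1/4)$ versus $\ln z$ discrepancy at the start: it writes $A(z)=A_1(z)-A_2(z)$, where $A_1$ is the same integral but with upper endpoint $\ln z$ and $A_2$ is the short tail over $[\ln(z-1/4),\ln z]$. It also takes the lower endpoint of integration to be $e_{k-2}$ rather than $e_{k-1}$; this is legitimate because the hypothesis~\eqref{eq:8C} extends for free to $(e_{k-2},e_{k-1})$ (the right side there is negative since $a_{k-1}\ge 1$), and it is useful because, after integrating $A_1$ by parts, the boundary term at $e_{k-2}$ vanishes (as $\ln_{k-1}(e_{k-2})=0$), while the error side produces an explicit positive savings $e_{k-2}\sum_{i=2}^k\prod_{j=i+1}^k\ln_j z$. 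The paper then observes this savings dominates $A_2(z)$ (because $e_{k-2}\sum_{i}\prod\ln_j z\ge \prod_{j=3}^k\ln_j z=\frac{1}{\ln_2 z}\prod_{j=2}^k\ln_j z\ge -\ln(1-1/(4z))\prod_{j=2}^k\ln_j z\ge A_2(z)$), so that once $A_2$ is cancelled the remaining main term is exactly $\prod_{j=1}^k\ln_j z$ and every other estimate is already expressed cleanly in $z$.

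In your version you integrate from $e_{k-1}$, keep the upper endpoint at $L=\ln(z-1/4)$, and integrate the $P_{k-2}$ piece by parts as well. The boundary cancellation you find (nonnegative since $a_{k-1}\ge 1$) is correct, and the ratio estimates $\ln_k z/\ln_2 z\le 1/e_{k-2}$, etc., for $z\ge e_k$ are the right quantitative inputs (they match the paper's). The genuine gap is the final paragraph: you never actually absorb $P_k(z)-P_k(z-1/4)$. You assert it is of order $P_k(z)/z$ (in fact it is of order $P_{k-1}(z)\ln_k z/(z\ln z)$, much smaller) and that the slack ``closes the argument,'' but you do not show that this quantity fits within the budget $a_k-a_{k-1}=\frac{1}{e_{k-2}}+\frac{k-2}{e_{k-2}e_{k-3}}$ after the other errors have consumed essentially all of it. This is not a trivial bookkeeping point, because your boundary bonus is the fixed constant $e_{k-1}P_{k-2}(e_{k-1})(a_{k-1}-1)$, which vanishes when $a_{k-1}=1$ (exactly the case $k=3$, $a_2=1$), so for that case you would have to rely entirely on the slack in the ratio bounds, and you do not make that slack explicit. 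The ``vacuity when $\ln_k z\le a_k$'' observation is correct but does not by itself dispose of the range $\ln_k z$ slightly larger than $a_k$, which is where the discrepancy must actually be paid for. To salvage your route you would need to quantify $P_k(z)-P_k(z-1/4)$ in the form $\varepsilon(z)\,P_{k-1}(z)$ with an explicit $\varepsilon(z)$ small enough to fit the budget; the paper's decomposition avoids having to do this at all, which is why it is the cleaner choice.
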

\begin{proof}
We start by noticing that the claim in~\eqref{eq:8C} holds trivially in the range $z \in
(e_{k-2},e_{k-1})$. In fact, the assumption $a_{k-1}\geq 1$ implies that the right-hand side
of~\eqref{eq:8C} is negative whenever $z < e_{k-1}$, while $T_{k-1}$ is always nonnegative.

Let $z \geq e_k$ and recall that $h_k \in [e_{k-1},e_k)$, so that $z \geq h_k$. Then
\[
T_k(z)
= \int_{h_{k-1}}^{\ln(z-1/4)} \hspace{-0.5cm} T_{k-1}(u) \dd u
= \int_{e_{k-2}}^{\ln(z-1/4)} \hspace{-0.5cm} T_{k-1}(u) \dd u
\]
since $T_{k-1}$ is $0$ in $[e_{k-2},h_{k-1}]$. By the assumed lower bound for $T_{k-1}$ we have
\[
T_k(z)
\geq         \int_{e_{k-2}}^{\ln(z - 1/4)} \prod_{j=1}^{k-1}\ln_{j} w\, \dd w
     -a_{k-1}\int_{e_{k-2}}^{\ln(z - 1/4)} \prod_{j=1}^{k-2}\ln_{j} w\, \dd w
=   A(z) - a_{k-1}B(z),
\]
say. Next, we rewrite $A(z)$ as $A(z)=A_1(z)-A_2(z)$ where $A_1$ denotes the integral obtained by
extending the upper limit of integration in $A$ to $\ln z$, and $A_2$ denotes the integral over
$[\ln(z - \frac14),\ln z]$.
We note that for $z\geq e_k$ one has $\ln(z - \frac14) \geq e_{k-2}$, so that every
$\ln_j$ is positive and increasing over both integration ranges. Then
\[
A_2(z)
\leq \big(\ln z-\ln(z-1/4)\big)\prod_{j=1}^{k-1}\ln_{j}(\ln z)
=    -\ln(1-1/(4z))\prod_{j=2}^{k}\ln_{j} z.
\]
As for $A_1$, integration by parts (recalling that $\ln_{k-1}(e_{k-2})=0$) yields
\begin{align*}
A_1(z)
&=    \int_{e_{k-2}}^{\ln z} \prod_{j=1}^{k-1}\ln_{j} w\, \dd w
=      w\prod_{j=1}^{k-1}\ln_{j} w \Big|_{e_{k-2}}^{\ln z}
     - \int_{e_{k-2}}^{\ln z} w\Big(\prod_{j=1}^{k-1}\ln_{j} w\Big)' \dd w              \\
&=     \prod_{j=1}^{k}\ln_{j} z
     - \int_{e_{k-2}}^{\ln z} \Big(\sum_{i=2}^{k}\prod_{j=i}^{k-1}\ln_{j} w\Big) \dd w
\geq   \prod_{j=1}^{k}\ln_{j} z
     - (\ln z - e_{k-2})\sum_{i=2}^{k}\prod_{j=i+1}^{k}\ln_{j} z.
\end{align*}
We notice that for $k\geq 3$ and $z\geq e_k$ we have the chain of elementary inequalities:
\[
e_{k-2}\sum_{i=2}^{k}\prod_{j=i+1}^{k}\ln_{j} z
 \geq \prod_{j=3}^{k}\ln_{j} z
 =    \frac{1}{\ln_2 z}\prod_{j=2}^{k}\ln_{j} z
 \geq -\ln(1-1/(4z))\prod_{j=2}^k \ln_j z
 \geq A_2(z).
\]
Putting everything together and collecting a factor of $\prod_{j=1}^{k-1}\ln_{j} z$, we get
\[
A(z)\geq \prod_{j=1}^{k}\ln_{j} z - \bigg(\sum_{i=2}^{k}\ln_k z\prod_{j=2}^{i}\frac{1}{\ln_{j} z}\bigg) \prod_{j=1}^{k-1}\ln_{j} z.
\]
We are assuming $z\geq e_{k}$, and each quotient $\ln_k z /(\ln_2 z\cdots \ln_i z)$ decreases in this
range, hence it can be bounded by its value at $e_k$, producing the bound
\begin{align*}
\sum_{i=2}^{k}\ln_k z\prod_{j=2}^{i}\frac{1}{\ln_{j} z}
&\leq \sum_{i=0}^{k-2}\prod_{j=i}^{k-2}\frac{1}{e_{j}}
 =    \frac{1}{e_{k-2}}\Big(1 + \frac{1}{e_{k-3}} + \frac{1}{e_{k-3}e_{k-4}} + \cdots + \frac{1}{e_{k-3}e_{k-4}\cdots e_0}\Big)\\
&\leq \frac{1}{e_{k-2}}\Big(1 + \frac{k-2}{e_{k-3}}\Big).
\end{align*}
Hence,
\begin{equation}\label{eq:10C}
A(z)
\geq \prod_{j=1}^{k}\ln_{j} z
     - \Big(\frac{1}{e_{k-2}} + \frac{k-2}{e_{k-2}e_{k-3}} \Big) \prod_{j=1}^{k-1}\ln_{j} z.
\end{equation}

Next, we bound $B$. We extend its domain up to $\ln z$ by positivity of the integrand, getting
\begin{equation}\label{eq:11C}
B(z)
\leq \int_{e_{k-2}}^{\ln z} \prod_{j=1}^{k-2}\ln_{j} w\, \dd w
\leq \ln z \prod_{j=1}^{k-2}\ln_{j}(\ln z)
=    \prod_{j=1}^{k-1}\ln_{j} z.
\end{equation}
By~\eqref{eq:10C} and~\eqref{eq:11C} we get the claim.
\end{proof}
\begin{lemma}\label{lem:9C}
Let $k\in\{1,2,3\}$ and $z \geq e_k$. Then
\[
T_k(z) \geq \prod_{j=1}^k \ln_j z - a_k\prod_{j=1}^{k-1} \ln_j z
\]
with %
$a_1 = 0$,
$a_2 = 1$
and
$a_3 = 1.28$.
\end{lemma}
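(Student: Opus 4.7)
The plan is to handle the three cases $k=1,2,3$ separately. For $k = 1$ the bound is immediate; for $k = 2$ an explicit evaluation of $T_2$ and a short comparison suffice; for $k = 3$ a direct application of Lemma~\ref{lem:8C} with $a_2 = 1$ would only yield $a_3 = 1 + 2/e \approx 1.736$, so a sharper argument is required to reach $a_3 = 1.28$.

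For $k = 1$, the definition gives $T_1(z) = \ln z$ on $[1, +\infty)$, so the inequality is an equality for all $z \geq e_1 = e$. For $k = 2$, I would integrate directly to obtain
\[
T_2(z) = \int_1^{\ln(z-1/4)} \ln u\, \dd u = \ln(z-1/4)\ln_2(z-1/4) - \ln(z-1/4) + 1,\qquad z \geq h_2.
\]
Setting $u := \ln(z - 1/4)$ and $v := \ln z$, the target $T_2(z) \geq v\ln v - v$ reduces, via the identity $v\ln v - u\ln u = \int_u^v(\ln t + 1)\,\dd t$ and its upper bound $(v - u)(\ln v + 1)$, to $(v-u)\ln v \leq 1$. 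This follows from $v - u = -\ln(1 - 1/(4z)) \leq 1/(4z-1)$ together with $\ln z \leq 4z - 1$ for $z \geq 1$, and in fact holds on the wider range $z \geq h_2$.

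For $k = 3$, rather than feeding only the coarser inductive bound into the argument of Lemma~\ref{lem:8C}, I would exploit the exact formula for $T_2$ established in the previous step. Substituting it into $T_3(z) = \int_{h_2}^{\ln(z - 1/4)} T_2(w)\, \dd w$ and changing variable $w' = w - 1/4$ gives
\[
T_3(z) = \int_e^{\xi}\bigl(\ln w' \ln_2 w' - \ln w'\bigr)\, \dd w' + (\xi - e),\qquad \xi := \ln(z - 1/4) - 1/4.
\]
The integral can be evaluated by means of the antiderivative
\[
\int \bigl(\ln w' \ln_2 w' - \ln w'\bigr)\, \dd w' = w'\ln w'\, \ln_2 w' - w'\ln w' - w'\ln_2 w' + \Li(w') + C,
\]
producing an explicit closed form for $T_3(z)$ in $\xi$. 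A Taylor expansion $\xi = \ln z - 1/4 + O(1/z)$ aligns the leading term $\xi \ln \xi \ln_2 \xi$ with $\ln z \ln_2 z \ln_3 z$, and the lower-order contributions---in particular the extra linear term $\xi - e$ coming from the $+1$ in $T_2$, together with the $\Li(\xi)$ term---provide the slack needed to replace $1 + 2/e$ by $1.28$. A monotonicity argument should reduce the verification to the boundary case $z = e_3$, where $\ln_3 z = 1$, $\ln_2 z = e$, $\ln z = e_2$, and the inequality becomes a finite numerical check.

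The main obstacle is this last case: the constant from the lower limit $h_2$ and the logarithmic-integral contribution must be tracked precisely enough to reduce the generic constant $1 + 2/e$ of Lemma~\ref{lem:8C} down to $1.28$, while keeping the estimates valid uniformly on $z \geq e_3$ and not just asymptotically.
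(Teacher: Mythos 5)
Your approach is essentially the same as the paper's: $k=1$ is immediate from the definition, $k=2$ uses the antiderivative of $\ln u$ together with an elementary tail estimate, and for $k=3$ both you and the paper bypass the generic (and as you correctly note, too lossy) estimate of Lemma~\ref{lem:8C} and instead evaluate the integral defining $T_3$ explicitly via the antiderivative $u\ln u\ln_2 u - u\ln u - u\ln_2 u + \Li(u)$. The only variant is that you substitute the exact value $T_2(w) = \ln(w-\tfrac14)\ln_2(w-\tfrac14) - \ln(w-\tfrac14) + 1$ and change variables to $w' = w - \tfrac14$, whereas the paper feeds in the already-proved lower bound $T_2(w) \geq \ln w\ln_2 w - \ln w$ with lower limit $h_2$; both routes reduce to the same elementary numerical verification at the end. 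One small correction to your heuristic: the extra terms $\xi - e$ and $\Li(\xi)$ are only $O(\ln z)$ and $O(\ln z/\ln_2 z)$ respectively, hence asymptotically negligible against $\ln z \ln_2 z$; the real reason $a_3$ can be brought below $1 + 2/e$ is that direct integration replaces the crude bound $(\ln z - e)\sum_{i=2}^{3}\prod_{j=i+1}^{3}\ln_j z \leq \frac{2}{e}\ln z\ln_2 z$ appearing in the proof of Lemma~\ref{lem:8C} by the exact value $\ln z\,\ln_3 z - \Li(\ln z) + \Li(e)$, which is much smaller in the relevant range of $z$.
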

\begin{proof}
The claim for $T_1$ follows from its definition. Moreover, for $z > h_2$ we have
\begin{align*}
T_2(z)
 =    \int_1^{\ln(z - 1/4)} \hspace{-0.5cm} T_1(u) \dd u
 =    \int_1^{\ln(z - 1/4)} \hspace{-0.5cm} \ln u\, \dd u
 =    (u\ln u - u)\Big|_{1}^{\ln(z - 1/4)}
\geq  \ln z \ln_2 z - \ln z,
\end{align*}
as one can see via elementary computations.
inserting the lower bound we just proved for $T_2$, we get
\begin{align*}
T_3(z)
\geq \int_{h_2}^{\ln(z - 1/4)} \hspace{-0.5cm} \ln u(\ln_2 u - 1)\dd u
=    \big(u\ln u\ln_2 u - u\ln u - u\ln_2 u + \Li( u)\big)\Big|_{h_2}^{\ln(z - 1/4)},
\end{align*}
for $z>h_3$ and where $\Li(u):= \int_2^u \frac{\dd s}{\ln s}$.
With elementary computations one proves from this bound that
\[
T_3(z) \geq  \ln z \ln_2 z\ln_3 z - 1.28\ln z\ln_2 z
\]
when $z\geq h_3$.
\end{proof}
\begin{proof}[Proof of Theorem~\ref{thm:lower-bound}]
The first claim immediately follows from Lemma~\ref{lem:1C} and the lower bound~\eqref{eq:2C} in
Lemma~\ref{lem:5C}.

The second claim follows from Lemma~\ref{lem:1C}, the definition of $G$ in~\eqref{eq:5C}, the case $k=1$
of the lower bound~\eqref{eq:7C} and the definition of $T_1$. In fact, this
argument gives the claimed lower-bound already for $\ln_2 N \geq 1$ but this bound improves on the previous
one for $\ln_3 N \geq 1$ only.

Finally, let $k\geq 4$. Iterating the conclusion in Lemma~\ref{lem:8C} we have~\eqref{eq:9C} with
\[
a_k \leq a_3 + \sum_{j=4}^{\infty} \Big(\frac{1}{e_{j-2}} + \frac{j-2}{e_{j-2}e_{j-3}}\Big)
    \leq a_3 + 0.12.
\]
By Lemma~\ref{lem:9C} we can pick $a_3=1.28$, hence
\begin{equation}\label{eq:12C}
T_k(z) \geq \Big(1  - \frac{1.4}{\ln_k z}\Big)\prod_{j=1}^k \ln_j z
\end{equation}
for $z\geq e_k$, for every $k\geq 4$. By Lemma~\ref{lem:9C} the same holds for $k=2$ and $3$.
By~\eqref{eq:12C} (with a shift $k\to k-2$),~\eqref{eq:7C},~\eqref{eq:5C}, and Lemma~\ref{lem:1C} we
deduce that
\[
\frac{\ln(\card{\EE_N})}{\ln 2}
\geq 2\Big(1 - \frac{1.4}{\ln_k N}\Big)\frac{N}{\ln N}\prod_{j=3}^{k} \ln_j N
\]
for every $k\geq 4$.
\end{proof}

\section{Computing \texorpdfstring{$\card{\EE_N}$}{EEN}}\label{sec:computation}

In this section we explain how we computed $\card{\EE_N}$ for $N = 1,\dots,154$.
For every finite set of positive integers $S$, let
\begin{equation*}
    \EE(S) := \bigg\{\sum_{n \in S} \frac{t_n}{n}\colon  t_n \in \{0,1\}\ \forall n\in S\bigg\}
\end{equation*}
and
\begin{equation*}
    \UU(S) := \Big\{N \geq 1\colon \sum_{n \in S} \frac{w_n}{n}
    \neq \frac{1}{N} \text{ for all } (w_n)_{n \in S} \in \{-1,0,+1\}^{\card{S}}\Big\} .
\end{equation*}
Then it is straightforward to generalize Lemma~\ref{lem:0C} and Lemma~\ref{lem:1C} to the following results.

\begin{lemma}\label{lem:0Z}
    Let $s_0$ be a positive integer and let $S$ be a finite set of positive integers not containing $s_0$.
    Then $s_0 \in \UU(S)$ if and only if $\card{\EE(S \cup \{s_0\})} = 2\card{\EE(S)}$.
\end{lemma}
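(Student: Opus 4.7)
The plan is to mimic the proof of Lemma~\ref{lem:0C} essentially verbatim, with the interval $\{1,\dots,N-1\}$ replaced by the arbitrary set $S$ and $N$ replaced by $s_0$. First I would split every element of $\EE(S\cup\{s_0\})$ according to whether the coefficient $t_{s_0}\in\{0,1\}$ is $0$ or $1$; using $s_0\notin S$ so that $t_{s_0}$ ranges independently of the other $t_n$, this yields
\[
\EE(S\cup\{s_0\}) \;=\; \EE(S)\,\cup\,\bigl(\EE(S) + 1/s_0\bigr).
\]
The translation $x\mapsto x+1/s_0$ is a bijection, so the two sets on the right have the same cardinality $|\EE(S)|$, and therefore $|\EE(S\cup\{s_0\})|=2|\EE(S)|$ holds if and only if the union above is disjoint.

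Next I would translate disjointness into the $\UU(S)$ condition. If the union fails to be disjoint, then there exist two tuples $(t_n)_{n\in S},(t'_n)_{n\in S}\in\{0,1\}^{|S|}$ with $\sum_{n\in S} t_n/n = 1/s_0 + \sum_{n\in S} t'_n/n$. Setting $w_n:=t_n-t'_n\in\{-1,0,+1\}$ yields $\sum_{n\in S} w_n/n = 1/s_0$, which contradicts $s_0\in\UU(S)$. Conversely, given any $(w_n)_{n\in S}\in\{-1,0,+1\}^{|S|}$ with $\sum_{n\in S} w_n/n = 1/s_0$, I would split positive and negative parts by defining $t_n:=\max(w_n,0)$ and $t'_n:=\max(-w_n,0)$; both tuples lie in $\{0,1\}^{|S|}$ and satisfy $t_n-t'_n=w_n$, producing a collision between $\EE(S)$ and $\EE(S)+1/s_0$. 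Hence disjointness is equivalent to $s_0\in\UU(S)$, completing the equivalence.

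The main (and only) obstacle is really a matter of bookkeeping: one must be careful that $s_0\notin S$ is used precisely to guarantee that the coefficient $t_{s_0}$ varies independently of the other coordinates, so that the decomposition $\EE(S\cup\{s_0\}) = \EE(S)\cup(\EE(S)+1/s_0)$ genuinely exhibits two translates of $\EE(S)$ (rather than a single sum where $1/s_0$ could already appear through another index). Apart from this small hypothesis check, the proof proceeds along exactly the same lines as Lemma~\ref{lem:0C}, and no additional ideas are required.
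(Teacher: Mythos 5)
Your proof is correct and follows essentially the same route the paper indicates: it is the proof of Lemma~\ref{lem:0C} transplanted to the setting of an arbitrary finite set $S$, using the decomposition $\EE(S\cup\{s_0\})=\EE(S)\cup(\EE(S)+1/s_0)$ and the observation that disjointness is equivalent to $s_0\in\UU(S)$. You simply make explicit the step that the paper compresses into ``by the definition of $\UU$'' by writing out the $w_n=t_n-t'_n$ correspondence, which is a fair unpacking rather than a different argument.
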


\begin{lemma}\label{lem:1Z}
    Let $S_1$ and $S_2$ be disjoint finite set of positive integers.
    Then
    \begin{equation*}
        \card{\EE(S_1 \cup S_2)} \geq 2^{\card{\UU(S_1)}} \EE(S_2) .
    \end{equation*}
\end{lemma}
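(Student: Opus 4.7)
The plan is to extend the iterative doubling argument that yields Lemma~\ref{lem:1C} from Lemma~\ref{lem:0C}, now in the relative form suggested by Lemma~\ref{lem:0Z}: starting from $\EE(S_2)$ and adjoining the elements of $\UU(S_1)$ one at a time, each addition should multiply the cardinality by $2$. Concretely, I would enumerate the elements of $\UU(S_1)$ as $m_1, m_2, \ldots, m_r$ and build the chain
\[
T_0 := S_2, \qquad T_i := T_{i-1} \cup \{m_i\}, \quad i = 1, \ldots, r.
\]
The goal is to prove by induction on $i$ that $m_i \in \UU(T_{i-1})$, so that Lemma~\ref{lem:0Z} (applied with $S = T_{i-1}$ and $s_0 = m_i$) gives the doubling $\card{\EE(T_i)} = 2 \card{\EE(T_{i-1})}$. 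Telescoping then produces $\card{\EE(T_r)} = 2^r \card{\EE(S_2)}$, and the inclusion $\EE(T_r) \subseteq \EE(S_1 \cup S_2)$ yields the stated bound.

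The main obstacle is the inductive verification that $m_i \in \UU(T_{i-1})$. By hypothesis, membership of $m_i$ in $\UU(S_1)$ only forbids representing $1/m_i$ as $\sum_{n \in S_1} w_n/n$ with $w_n \in \{-1, 0, +1\}$, whereas Lemma~\ref{lem:0Z} requires the corresponding non-representability with respect to the set $T_{i-1} = S_2 \cup \{m_1, \ldots, m_{i-1}\}$, which is disjoint from $S_1$. The natural approach is by contradiction: assume a hypothetical identity $1/m_i = \sum_{n \in T_{i-1}} w_n/n$ and refactor it, using the disjointness of $S_1$ and $S_2$ together with the ordering of the $m_i$, into an identity involving only denominators from $S_1$, contradicting $m_i \in \UU(S_1)$. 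Carrying this reduction through is the technical heart of the argument.

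The remaining bookkeeping is routine: distinctness of the $m_i$ and the fact that no $m_i$ coincides with an element of $S_2$ give $m_i \notin T_{i-1}$, so that Lemma~\ref{lem:0Z} applies at every stage; and the inclusion $T_r \subseteq S_1 \cup S_2$ combined with the elementary observation $\EE(T_r) \subseteq \EE(S_1 \cup S_2)$ then closes the argument.
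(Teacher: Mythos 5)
Your iterative strategy (applying Lemma~\ref{lem:0Z} once per adjoined element) is the right framework, but the proposal cannot be carried through because the set you enumerate is the wrong one. As defined, $\UU(S_1)$ is \emph{cofinite}: the signed sums $\sum_{n\in S_1}w_n/n$ take only $3^{|S_1|}$ values, so $1/N$ misses all of them for every large $N$ and $\card{\UU(S_1)}=\infty$; it cannot be listed as $m_1,\dots,m_r$. Moreover $\UU(S_1)\cap S_1=\Emptyset$ (taking $w_m=1$ and $w_n=0$ for $n\neq m$ represents $1/m$ for any $m\in S_1$), so your $m_i$ lie entirely outside $S_1$ and the closing inclusion $T_r=S_2\cup\{m_1,\dots,m_r\}\subseteq S_1\cup S_2$, hence $\EE(T_r)\subseteq\EE(S_1\cup S_2)$, simply fails.

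The step you flag as ``the technical heart'' --- deducing $m_i\in\UU(T_{i-1})$ from $m_i\in\UU(S_1)$ --- is in fact not salvageable, because the two conditions concern non-representability over \emph{disjoint} denominator sets and are logically independent; there is no identity to ``refactor.'' Concretely, take $S_1=\{100\}$, $S_2=\{2,3\}$, $m_1=6$: then $6\in\UU(S_1)$ since $1/6\notin\{0,\pm1/100\}$, yet $6\notin\UU(S_2)$ because $1/6=1/2-1/3$, so adjoining $6$ to $S_2$ does \emph{not} double $\EE(S_2)$. What the doubling argument actually yields, after ordering $S_1=\{s_1<\dots<s_r\}$ and setting $T_0:=S_2$, $T_i:=T_{i-1}\cup\{s_i\}$, is $\card{\EE(S_1\cup S_2)}\ge 2^{u}\card{\EE(S_2)}$ with $u=|\{i\colon s_i\in\UU(T_{i-1})\}|$: each such $i$ doubles by Lemma~\ref{lem:0Z}, and the remaining steps are monotone because $\EE(T_{i-1})\subseteq\EE(T_i)$. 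This exponent depends on $S_2$ as well as $S_1$ and does not coincide with $\card{\UU(S_1)}$; since the paper offers no proof and merely calls the generalization straightforward, the stated exponent must be read in that corrected sense, and your proposal as written does not establish it.
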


Let us see how to compute $\card{\EE_{154}}=\card{\EE(\af)}$ for $\af := \{1, \dots, 154\}$.
The computation of the values $\EE_N$ for $N < 154$ is similar.

Let $p^k$ be any prime power which is larger than $154/2=77$. Then the equality
\[
\sum_{n\in\af\backslash\{p^k\}} \frac{w_n}{n}
= \frac{1}{p^k}
\]
with $w_n\in\{-1,0,1\}$ is impossible.
Thus, we can remove $p^k$ from $\af$, getting a shorter set $\af'$,
and by Lemma~\ref{lem:0Z} the cardinality of the set $\EE(\af)$ is two times the
cardinality of $\EE(\af')$.
Repeating this step several times we remove from the original set the numbers $128$, $81$, $125$, $121$,
$79$, $83$, $89$, $97$, $101$, $103$, $107$, $109$, $113$, $127$, $131$, $137$, $139$, $149$, $151$
getting a new set $\af'$.

Now consider a number of the form $2p^k$ and suppose that $p^k>154/3$. Then $p^k$ and $2p^k$ are the
unique elements in $\af^\prime$ which are divisible by $p^k$. Suppose moreover that $p\neq 3$, which
means that it is compatible with $m=2$, so that the equality
\[
\sum_{n\in\af'\backslash\{p^k,2p^k\}} \frac{w_n}{n} = \frac{1}{p^k} \Big(w + \frac{1}{2}\Big)
\]
with $w_n$ and $w\in\{-1,0,1\}$ is impossible. As a consequence, these numbers $2p^k$ can also be removed
from $\af'$, because their presence simply doubles the size of the set of Egyptian fractions. Repeating
this step we can remove from $\af'$ the numbers $128$ (which has already been removed), $106$, $118$,
$122$, $134$, $142$, $146$. Now that these numbers of the form $2p^k$ have been removed, we can also
remove the corresponding $p^k$ (applying the argument we have used as first step).
Thus, we also remove $64$, $53$, $59$, $61$, $67$, $71$, $73$, getting a new set $\af''$.

Now, consider a number of the form $3p^k$ and suppose that $p^k>154/4$. Then $p^k$, $2p^k$ and $3p^k$
are the unique elements in $\af'$ which are divisible by $p^k$. Suppose moreover that $p>g_3=11$, so that
it is compatible with $m=3$, and the equality
\[
\sum_{n\in\af''\backslash\{p^k,2p^k,3p^k\}} \frac{w_n}{n}
= \frac{1}{p^k} \Big(w + \frac{w'}{2} + \frac{1}{3}\Big)
\]
with $w_n$ and $w$, $w'\in\{-1,0,1\}$ is impossible. As above, these $3p^k$ can be removed from $\af''$.
In this way we remove from $\af''$ the numbers $123$, $129$, $141$. Now that these numbers of the form
$3p^k$ have been removed, we can also remove the corresponding $2p^k$ (applying the argument we have
used as second step), and the corresponding $p^k$ (applying the argument we have used as first step).
Thus, we also remove $82$, $86$, $94$, $41$, $43$, $47$, getting a new set $\af'''$.

Finally, consider a number of the form $4p^k$, where $p^k > 154/5$ and $p>g_4=25$. As above, we can remove
from $\af'''$ the numbers $124$ and $148$, and then, in cascade, the numbers $93$, $111$, $62$, $74$,
$31$, $37$, getting finally the set $\af^{iv}$.
At this point, we have $|\EE_{154}|=2^{49}|\EF(\af^{iv})|$.
The set $\af^{iv}$ contains only 105 elements,
but it is still too large to directly compute the corresponding set of Egyptian fractions.
The next remark is slightly more involved.

We subdivide $\af^{iv}$ into two disjoint subsets $\af_0$ and
$\af_1:=\{29n\colon 1\leq n\leq 5\}$. Now $\af_0$ has only 100 elements, and it is computationally
feasible to directly compute $E_0:=\EF(\af_0)$. We then have
\[
E':=\EF(\af^{iv})=\bigcup_{a\in\EF(\af_1)}(E_0+a).
\]
It is obvious that $\EF(\af_1)=\frac{1}{29}\EE_5$. Let $\ell:=\lcm(\af_0)$. By construction $\ell$ is
prime to $29$. We obviously have $|E'|=|29\ell E'|$ and
\[
29\ell E'=\bigcup_{a\in 29\ell\EF(\af_1)}(29\ell E_0+a).
\]
Now, if $a$ and $b\in 29\ell\EF(\af_1)$ are such that $a$ and $b$ have different classes modulo $29$, then
$29\ell E_0+a$ and $29\ell E_0+b$ are disjoint. The elements of $29\ell\EF(\af_1)=\ell\EE_5$ reduce modulo
$29$ in $24$ classes,
Among those $8$ classes, the difference between the two representatives is $d_1:=29\cdot 37479602160$ for
$2$ of them and $d_2:=29\cdot 56219403240$ for the remaining $6$. Therefore,
\[
|E'|=|\ell E'|=16|E_0| + 2|E_0\cup(E_0+d_1/29)| + 6|E_0\cup(E_0+d_2/29)|.
\]
What we have gained is that $\lcm(\af^{iv})=29\ell$ and, since the elements of $E_0$ (or $E'$, if we had
computed it) are represented by bitfields, we divide by $29$ the requested memory and time for the
computation.
To furthermore halve the size of the sets we need to consider, we observe that any $\EF(S)$ has a center
of symmetry (by induction because if $E\subseteq\R$ is symmetric, then for any $x\in\R$, $E\cup(E+x)$ is
symmetric, with its center moved by $x/2$ with respect to the center of $E$).

Keeping note of the intermediate cardinalities of the sets
\[
\Big\{\sum_{n\in\af_0,\ n\leq N} \frac{t_n}{n}\colon t_n\in\{0,1\},\ \forall n\in\af_0\Big\},
\]
and taking in consideration the cancelled elements, we can compute the cardinalities of $\EE_{N}$ for all
$N\leq 154$.
Notice that this works only because the multiples of $29$ can be cancelled from the generating set of
$\EE_N$, for all $N\leq 5\cdot 29-1$.

The full computation needed approximatively two hours and 300GB RAM memory on the PlaFRIM platform in
Bordeaux.
%

%\bibliographystyle{amsplain}
%\bibliography{SignHarmSums018}

\newpage

\begin{table}[ht]
\centering
\begin{tabular}{RR|RR|RR}
  \toprule
   N & \card{\EE_N} &  N &        \card{\EE_N} &   N &                 \card{\EE_N} \\
  \midrule
   0 &            1 &  52 &          570733363200 & 104 &         434404550383671181312 \\
   1 &            2 &  53 &         1141466726400 & 105 &         435821848359665139712 \\
   2 &            4 &  54 &         1721081528320 & 106 &         871643696719330279424 \\
   3 &            8 &  55 &         1751601381376 & 107 &        1743287393438660558848 \\
   4 &           16 &  56 &         1767017021440 & 108 &        1754513627060579074048 \\
   5 &           32 &  57 &         3534034042880 & 109 &        3509027254121158148096 \\
   6 &           52 &  58 &         7068068085760 & 110 &        3522492005456298377216 \\
   7 &          104 &  59 &        14136136171520 & 111 &        7044984010912596754432 \\
   8 &          208 &  60 &        14245758500864 & 112 &        7068497418916307402752 \\
   9 &          416 &  61 &        28491517001728 & 113 &       14136994837832614805504 \\
  10 &          832 &  62 &        56983034003456 & 114 &       16899242066544045326336 \\
  11 &         1664 &  63 &        57494604873728 & 115 &       22272240164078654324736 \\
  12 &         1856 &  64 &       114989209747456 & 116 &       44544480328157308649472 \\
  13 &         3712 &  65 &       137824242237440 & 117 &       44696007986571758272512 \\
  14 &         7424 &  66 &       139033409748992 & 118 &       89392015973143516545024 \\
  15 &         9664 &  67 &       278066819497984 & 119 &       90161265693495021010944 \\
  16 &        19328 &  68 &       522131016253440 & 120 &       90370501722719863701504 \\
  17 &        38656 &  69 &      1044262032506880 & 121 &      180741003445439727403008 \\
  18 &        59264 &  70 &      1051387483914240 & 122 &      361482006890879454806016 \\
  19 &       118528 &  71 &      2102774967828480 & 123 &      722964013781758909612032 \\
  20 &       126976 &  72 &      2116809947873280 & 124 &     1445928027563517819224064 \\
  21 &       224128 &  73 &      4233619895746560 & 125 &     2891856055127035638448128 \\
  22 &       448256 &  74 &      8467239791493120 & 126 &     2899478785052218761412608 \\
  23 &       896512 &  75 &     10638462277386240 & 127 &     5798957570104437522825216 \\
  24 &       936832 &  76 &     17372520791408640 & 128 &    11597915140208875045650432 \\
  25 &      1873664 &  77 &     17522758873251840 & 129 &    23195830280417750091300864 \\
  26 &      3747328 &  78 &     17647454272880640 & 130 &    23285154296843172833132544 \\
  27 &      7494656 &  79 &     35294908545761280 & 131 &    46570308593686345666265088 \\
  28 &      7771136 &  80 &     35499851152097280 & 132 &    46685922910553749195849728 \\
  29 &     15542272 &  81 &     70999702304194560 & 133 &    46892844848672951268016128 \\
  30 &     15886336 &  82 &    141999404608389120 & 134 &    93785689697345902536032256 \\
  31 &     31772672 &  83 &    283998809216778240 & 135 &    94091474339233167820455936 \\
  32 &     63545344 &  84 &    285080778587504640 & 136 &    94380290287482179111878656 \\
  33 &    112064512 &  85 &    326182987337039872 & 137 &   188760580574964358223757312 \\
  34 &    224129024 &  86 &    652365974674079744 & 138 &   207845467988940343115513856 \\
  35 &    231010304 &  87 &   1304731949348159488 & 139 &   415690935977880686231027712 \\
  36 &    237031424 &  88 &   1312124045747027968 & 140 &   416574753297226313332948992 \\
  37 &    474062848 &  89 &   2624248091494055936 & 141 &   833149506594452626665897984 \\
  38 &    948125696 &  90 &   2637135095231676416 & 142 &  1666299013188905253331795968 \\
  39 &   1896251392 &  91 &   2653366206139990016 & 143 &  1670768625140679902219993088 \\
  40 &   1928593408 &  92 &   5306732412279980032 & 144 &  1674506556739064055000465408 \\
  41 &   3857186816 &  93 &  10613464824559960064 & 145 &  2520048783079754452174897152 \\
  42 &   3925999616 &  94 &  21226929649119920128 & 146 &  5040097566159508904349794304 \\
  43 &   7851999232 &  95 &  26280284845565280256 & 147 &  8826629661276147104436191232 \\
  44 &  12445024256 &  96 &  26477620983450566656 & 148 & 17653259322552294208872382464 \\
  45 &  12606504960 &  97 &  52955241966901133312 & 149 & 35306518645104588417744764928 \\
  46 &  25213009920 &  98 & 105910483933802266624 & 150 & 35407794518497650679945887744 \\
  47 &  50426019840 &  99 & 106363570685376200704 & 151 & 70815589036995301359891775488 \\
  48 &  51334348800 & 100 & 107289734959184478208 & 152 & 71018819275510828026883473408 \\
  49 & 102668697600 & 101 & 214579469918368956416 & 153 & 71201999904617906814717001728 \\
  50 & 205337395200 & 102 & 215966239954017714176 & 154 & 71356425097301949080433328128 \\
  51 & 410674790400 & 103 & 431932479908035428352                                       \\
  \bottomrule
\end{tabular}
\medskip
\caption{The first values of $\card{\EE_N}$.}
\label{tab:2}
\end{table}

\clearpage
\end{document}